\theoremstyle{plain}
\newtheorem{theorem}{Theorem}
\newtheorem{lemma}[theorem]{Lemma}
\newtheorem{proposition}[theorem]{Proposition}
\newtheorem{corollary}[theorem]{Corollary}
\newtheorem{example}[theorem]{Example}
\renewcommand\ell{l}
\newcounter{mycount}
\numberwithin{equation}{section}
\numberwithin{theorem}{section}
\numberwithin{figure}{section}
\newcommand{\wbar}\widebar
\DeclareMathAlphabet{\mathpzc}{OT1}{pzc}{m}{it}
\title{Matchings on Random Regular Hypergraphs}
\author{Zhongyang Li}
\address{(ZL) Department of Mathematics,
University of Connecticut, Storrs, Connecticut 06269-3009, USA} \email{zhongyang.li@uconn.edu}
\urladdr{\url{http://www.math.uconn.edu/~zhongyang/}}
\begin{document}

\begin{abstract}
We study the monomer--dimer partition function on the configuration model of
random $d$-regular, $l$-uniform hypergraphs.  For fixed $d,l\ge2$, we prove
quenched free-energy limits in explicit parameter regimes.  The proof combines
fixed-density first-moment asymptotics, a two-overlap second-moment variational
analysis, and a subgraph-conditioning argument for the short cycles of the
incidence structure.  The main technical point is to identify regimes in which
the replica-symmetric saddle is the unique global maximizer of the second-moment
rate function.  In those regimes the normalized logarithm of the total matching
partition function converges in probability to an explicit variational value.  We
also prove the corresponding result for the weighted partition function whenever
the maximizing density lies in the verified replica-symmetric region, give an
additional checkable criterion for that region, and record a first-moment upper
tail estimate for the maximum matching size.
\end{abstract}

\maketitle

\section{Introduction}

Random sparse factor graphs provide a natural setting for Gibbs measures with
hard constraints.  A central quantity in such models is the quenched free energy,
the normalized logarithm of the partition function.  For locally tree-like
models, non-rigorous cavity predictions and rigorous tree recursions often point
to a replica-symmetric formula, but a finite random graph may still have
non-negligible fluctuations caused by short cycles.  This paper studies this
problem for matchings, or monomer--dimer configurations, on random regular
uniform hypergraphs.  We prove convergence in probability of the quenched free
energy in explicit finite-degree regimes by combining a fixed-density
second-moment analysis with subgraph conditioning.

Counting matchings has been a central problem in combinatorics and statistical
mechanics since at least the 1960s; see \cite{ER66,HL70,HL72}.  Perfect
matchings, or dimer configurations, form an important special case.  Although the
number of perfect matchings in a planar, or nearly planar, graph can often be
computed by determinantal formulae \cite{Kas61,TF61,RK09}, counting all
matchings in two-dimensional graphs is more subtle; see \cite{JM87} and the
algorithms in \cite{CRS96,GK09}.  The full matching partition function is in
some respects more stable than the perfect-matching partition function: for
example, the normalized logarithm of the number of matchings is continuous along
weakly convergent graph sequences, whereas the corresponding statement for
perfect matchings fails even for bipartite regular graphs \cite{ACFK16}.

We work in the configuration model.  Let $d,l\ge2$.  A $(d,l)$-regular
hypergraph $G=(V,E,H)$ consists of a set $H$ of half-edges, a set $E$ of
hyperedges, each containing $l$ distinct half-edges, and a set $V$ of vertices,
each containing $d$ distinct half-edges.  Each half-edge belongs to exactly one
hyperedge and to exactly one vertex.  We allow two distinct half-edges of the
same hyperedge to belong to the same vertex, and we also allow two distinct
hyperedges to share more than one vertex.  Thus the model is not restricted to
simple hypergraphs.

A subset $M\subset E$ is a matching if no vertex is incident to more than one
half-edge belonging to the hyperedges in $M$.  Equivalently, all $l|M|$
half-edges contained in the hyperedges of $M$ are incident to distinct vertices.
This convention rules out, in particular, a selected hyperedge whose own
half-edges meet the same vertex twice.

Let $m$ be the number of hyperedges and assume $lm/d\in\mathbb N$.  We take
$E=[m]$ and $H=[lm]$, with half-edge $i$ belonging to the hyperedge
$\lceil i/l\rceil$.  A configuration is obtained by partitioning $H$ into $lm/d$
vertex-classes of size $d$.  For counting purposes it is convenient to use the
equivalent model in which each vertex-class is given a cyclic order.  Since each
underlying partition has exactly $((d-1)!)^{lm/d}$ such representations, this
convention does not change the induced uniform distribution on underlying vertex
partitions.  Under this convention the number of configurations is
\begin{equation}\label{dlr}
  \prod_{j=0}^{lm/d-1}\prod_{k=1}^{d-1}(k+jd)
  =\frac{(lm)!}{d^{lm/d}(lm/d)!} .
\end{equation}
For fixed $d$ and $l$, Stirling's formula gives
\[
  \frac{(lm)!}{d^{lm/d}(lm/d)!}
  =e^{O(1/m)}\sqrt d\left(\frac{lm}{e}\right)^{lm(d-1)/d} .
\]
Let $\Omega_{m,d,l}$ denote the set of these configurations, and let
$\mathcal G_{m,d,l}$ be uniformly distributed on $\Omega_{m,d,l}$.  All limits in
this paper are taken as $m\to\infty$ through values for which $lm/d\in\mathbb N$.

The case $l=2$ is the monomer--dimer model on a random regular graph.  Its free
energy was computed in \cite{AC14}, and the density of maximum matchings was
studied via local weak convergence in \cite{EL10,BLS13}; see also
\cite{AS04,DM10}.  The hypergraph case $l\ge3$ is different at the level of the
second moment.  At a fixed density, an ordered pair of matchings is described by
two overlap parameters, and the second moment reduces to a two-dimensional
variational problem.  The replica-symmetric saddle is the point predicted by two
independent random matchings, but it is not automatic that this point is the
global maximizer.  A main contribution of the paper is to prove this global
maximality in explicit regimes, and then to use short-cycle conditioning to turn
the second-moment information into a quenched free-energy limit.

The proof is model-specific.  After the first-moment calculation, we compute an
exact fixed-density second-moment sum and analyze its rate function.  In the
verified replica-symmetric regimes, the unique global maximum is non-degenerate,
so a two-dimensional Laplace estimate gives the second-moment asymptotics.  The
remaining fluctuations come from cycles in the incidence structure.  We compute
the limiting joint law of finitely many short-cycle counts, both unconditionally
and conditioned on a fixed matching, and apply subgraph conditioning.  This is
analogous in spirit to the use of subgraph conditioning for Hamiltonian cycles in
random regular graphs \cite{RW94}, and to related applications of moment and
subgraph-conditioning methods in \cite{AM06,AN05,GSV16}, but the conditional
cycle calculation here reflects the hypergraph matching constraint.

We also distinguish the present result from more general approaches to sparse
factor models.  Dembo, Montanari and Sun \cite{DMS13Factor} prove Bethe-type
free-energy formulae for locally tree-like graph sequences under uniqueness
hypotheses, while Coja-Oghlan and Perkins \cite{COP18} prove a general belief
propagation formula for replica-symmetric random factor graph models, including
random regular factor graphs.  For hypergraph matchings, Song, Yin and Zhao
\cite{SYZ19} study approximate counting and correlation decay up to the
uniqueness threshold.  Our contribution is complementary to these frameworks:
we give a finite-degree, matching-specific verification of the
replica-symmetric saddle in explicit regions of the random regular configuration
model, together with the short-cycle correction needed for quenched convergence
in probability.

There is also a tree-recursion perspective.  A Gibbs measure for matchings on the
infinite $d$-regular, $l$-uniform hypertree may or may not be unique, and it may
or may not satisfy strong spatial mixing (SSM).  In regimes where uniqueness or
correlation decay is available, one can often obtain the limiting free energy
from the tree recursion; see, for example, \cite{DW06,LL13,BST16,CZ12,BGS19}.
The criteria below are complementary finite-degree moment conditions.  They are
not intended to replace the uniqueness or correlation-decay theory, and we make
no claim that they describe a sharp threshold.  Rather, they verify directly, for
the random regular configuration model, that the replica-symmetric saddle controls
the relevant moment calculation.  The appendix shows, however, that the additional
criterion in Theorem~\ref{th12} can hold for only finitely many pairs $(d,l)$.

When $d=2$, the present model is equivalent to the independent-set model on a
random $l$-regular graph at activity $1$.  Maximum-size fluctuations for that
model have been studied when $l$ is large \cite{SSD}.  Free energies of more
general vertex models on graphs have been studied, for example, in
\cite{ZL11,GrL18,GrL16,GrL17}.  The present paper keeps the hypergraph matching
structure explicit, because the overlap geometry and the short-cycle correction
are both specific to this model.

We now state the main results.  All free energies in the paper are normalized by
$m$, the number of hyperedges; normalization by the number of vertices differs by
the deterministic factor $d/l$.  For an integer $h\ge0$, let $Z_h$ be the number
of matchings with exactly $h$ hyperedges, and let
$Z=\sum_{h=0}^{\lfloor m/d\rfloor}Z_h$ be the total number of matchings.

\begin{theorem}\label{thm}
\begin{enumerate}
\item Let $\beta_*$ be the unique solution in $(0,1/d)$ of
\begin{equation}\label{eq1}
  (1-d\beta)^l=\beta(1-\beta)^{l-1}.
\end{equation}
Define
\begin{equation}\label{pdb}
  \Phi_{d,l}(\beta)
  =-\beta\log \beta+(l-1)(1-\beta)\log(1-\beta)
   -\frac ld(1-d\beta)\log(1-d\beta).
\end{equation}
Then
\[
  \lim_{m\to\infty}e^{-m\Phi_{d,l}(\beta_*)}\mathbb E Z
  =\sqrt{\frac{1-\beta_*}{1+(ld-d-l)\beta_*}} .
\]

\item For $l\ge3$, define
\begin{align}
  L_1&:=\frac{1}{dl-d-l+2}
       =\frac{1}{(d-1)(l-1)+1},\label{eq:intro-L1}\\
  L_2&:=\min\biggl\{
    \frac1d\left(1-\sqrt{\frac{d-1}{d^{l/(l-1)}-1}}\right),\notag\\
  &\hspace{25mm}
    \frac{dl+l^2-2l-d+1}{2dl^2-dl},
    \frac{1}{1+\sqrt{(d-1)(l-1)}}
  \biggr\},\label{eq:intro-L2}\\
  L_{\rm cert}&:=\max\{L_1,L_2\},\label{eq:intro-Lcert}\\
  x_{\rm cert}&:=\exp\{-\Phi'_{d,l}(L_{\rm cert})\}.\label{eq:intro-xcert}
\end{align}
Assume that either
\begin{enumerate}
\item $l=2$, or
\item $l\ge3$ and $\beta_*\le L_{\rm cert}$.
\end{enumerate}
Equivalently, in the case $l\ge3$, one may write the condition as
$x_{\rm cert}\ge1$.  Then
\[
  \frac1m\log Z\longrightarrow \Phi_{d,l}(\beta_*)
\]
in probability.

\item Let $x>0$ and define the weighted partition function
\begin{equation}\label{dzx}
  Z(x):=\sum_{h=0}^{\lfloor m/d\rfloor} Z_h x^h .
\end{equation}
Let $\beta_*(x)$ be the unique solution in $(0,1/d)$ of
\[
  \Phi'_{d,l}(\beta)+\log x=0.
\]
Assume that either
\begin{enumerate}
\item $l=2$, or
\item $l\ge3$ and $0<x\le x_{\rm cert}$.
\end{enumerate}
Equivalently, in the case $l\ge3$, the condition is
$\beta_*(x)\le L_{\rm cert}$.  Then
\[
  \frac1m\log Z(x)
  \longrightarrow \Phi_{d,l}(\beta_*(x))+\beta_*(x)\log x
\]
in probability.
\end{enumerate}

\end{theorem}

\medskip\noindent\textbf{Comment on the certified threshold.}
The notation above is only a compression of the two verified density intervals
proved later in the paper.  The $L_1$ interval comes from the star-shaped
analysis in Section~\ref{sm}, while the $L_2$ interval comes from the
one-dimensional reduction in Section~\ref{ptc}.  Since $\Phi'_{d,l}$ is strictly
decreasing, the condition $x\le x_{\rm cert}$ is equivalent to
$\beta_*(x)\le L_{\rm cert}$.

The first of these two explicit intervals has an especially simple form.  Indeed,
for $l\ge3$,
\[
  \frac{1}{dl-d-l+2}\le \frac1{1+\sqrt{(d-1)(l-1)}},
\]
so the second entry in the earlier definition of $L_1$ is redundant.  Moreover
\begin{equation}\label{eq:intro-L1-algebraic}
  \Phi'_{d,l}(L_1)\le0
  \quad\Longleftrightarrow\quad
  (d-1)(l-2)^l\le (l-1)^{l-1}.
\end{equation}
Equivalently, the $L_1$ part of the weighted theorem is
\[
  0<x\le \exp\{-\Phi'_{d,l}(L_1)\}
  =\frac{(l-1)^{l-1}}{(d-1)(l-2)^l}.
\]
The threshold $L_{\rm cert}$, or equivalently $x_{\rm cert}$, is not claimed to be
sharp; it is the largest density interval certified by the two closed-form criteria
$L_1$ and $L_2$ proved here.
\medskip

The next result gives another explicit criterion under which the same
replica-symmetric free-energy formula holds.  For $0<\beta<1/d$ and
$0<s<\beta$, set
\begin{equation}\label{dt}
  t_\beta(s):=\frac{(\beta-s)^{1/l}s^{1-2/l}}
  {(1-\beta-s)^{(l-1)/l}} .
\end{equation}
Also define
\begin{equation}\label{dc0}
  c_0(\beta):=\frac{3\beta l-2l+1+
  \sqrt{9\beta^2l^2-8\beta^2l-12\beta l^2+10\beta l+4l^2-4l+1}}
  {2l}.
\end{equation}
As recorded in Section~\ref{sm}, this is the zero in $(0,\beta)$ of the
one-variable function in \eqref{eq:c0-as-zero}; in particular, the interval in
Theorem~\ref{th12} is well defined.  Define
\begin{align}
G_\beta(s)&:=(1-(d-1)t_\beta(s))(d+l-dl)s^2 \notag\\
&\quad +2\beta(1-\beta d)(1-\beta)
        (1-(d-1)(l-1)t_\beta(s)) \label{dgb}\\
&\quad +s\Big[-t_\beta(s)(d-1)
        (\beta l-3\beta d-2l+2\beta^2d+3\beta dl \notag\\
&\hspace{38mm}{}-2\beta^2dl+1)
        +2\beta^2d-\beta l-\beta d+\beta dl-1\Big].\notag
\end{align}

\begin{theorem}\label{th12}
Let $\beta_*$ be as in Theorem~\ref{thm}.  If
\[
  G_{\beta_*}(s)>0,
  \qquad 0<s<\beta_*-c_0(\beta_*),
\]
then
\begin{equation}\label{czp}
  \frac1m\log Z\longrightarrow \Phi_{d,l}(\beta_*)
\end{equation}
in probability.
\end{theorem}

For orientation, we record a few concrete parameter values.

\begin{example}\label{ex:main-examples}
\begin{enumerate}
\item By \eqref{eq:intro-L1-algebraic}, the $L_1$ part of the certified condition
with $x=1$ holds precisely when
\[
  (d-1)(l-2)^l\le (l-1)^{l-1}.
\]
Thus Theorem~\ref{thm}(2) applies, for example, for all $l=2$, for
$l=3$ and $2\le d\le5$, and for $(d,l)=(2,4),(2,5)$.
For these parameters,
\[
  \frac1m\log Z\longrightarrow \Phi_{d,l}(\beta_*)
\]
in probability.

\item The condition in Theorem~\ref{th12} is a one-dimensional positivity
condition for an explicit function on the interval
$(0,\beta_*-c_0(\beta_*))$.  For any fixed pair $(d,l)$, it can be checked in a
certified way, for example by interval arithmetic applied to the explicit
function $G_{\beta_*}$.  No numerical verification is used in the proofs of the
theorems.
\end{enumerate}
\end{example}

For $d=2$, Theorem~\ref{thm}(2) gives exact limiting free energies for the
independent-set model on random $l$-regular graphs at activity $1$ for the
parameter values listed in Example~\ref{ex:main-examples}(1).  This is
complementary to \cite{LO18}, which studies independent sets on regular graphs
with vertex weight $\lambda>1$ and obtains upper bounds for the normalized free
energy in a large-activity regime.

The paper is organized as follows.  Section~\ref{fm} computes the first moment of
the number of matchings at a fixed density and the first moment of the total
number of matchings.  Section~\ref{sm} computes the fixed-density second moment
and proves explicit criteria for the replica-symmetric saddle to be the global
maximizer.  Section~\ref{sc} proves the required cycle estimates and the
conditional Poisson limits used in subgraph conditioning.  Section~\ref{fe}
proves convergence in probability of the unweighted free energy in the $L_1$
regime.  Section~\ref{wfe} proves the weighted free-energy statement in the
corresponding $L_1$ regime.  Section~\ref{ptc} gives the additional $L_2$
criterion, completes the proof of the certified-threshold formulation in
Theorem~\ref{thm}, and proves Theorem~\ref{th12}.
Section~\ref{mm} records a first-moment consequence for the upper tail of the
maximum matching size.  The appendix discusses the range of applicability of the
criterion in Theorem~\ref{th12}.

\section{First moment}\label{fm}

Throughout this section, $d$ and $l$ are fixed, and $m\to\infty$ through admissible values for which $lm/d\in\mathbb{N}$.  For an integer $h\geq 0$, let $Z_h$ denote the number of matchings consisting of exactly $h$ hyperedges in the random $(d,l)$-regular hypergraph $\mathcal{G}_{m,d,l}$.  When $h=m\beta$, we write $Z_{m\beta}$ for $Z_h$; all such asymptotics are understood along subsequences for which $m\beta\in\mathbb{N}$.  In this section we compute the first moment of $Z_{m\beta}$ and then the first moment of the total number of matchings $Z=\sum_h Z_h$.

Since a matching of size $h$ uses $lh$ distinct vertices and $\mathcal{G}_{m,d,l}$ has $lm/d$ vertices, $Z_h=0$ whenever $h>m/d$.  Assume below that
\begin{equation}
0<\beta<\frac1d, \qquad h=m\beta .
\label{cd4}
\end{equation}
Choose first the $h$ hyperedges that are to be present in the matching.  These hyperedges form a matching if and only if their $lh$ half-edges are incident to $lh$ distinct vertices.  Exposing the vertex-classes of these $lh$ half-edges sequentially gives
\begin{align}
\mathbb{E}Z_h
&=\binom{m}{h}\prod_{j=0}^{d-2}\prod_{i=0}^{lh-1}
\frac{l(m-h)-(d-1)i-j}{lm-1-di-j}
\notag\\
&=\frac{m!}{h!(m-h)!}\,
  \frac{(l(m-h))!}{(l(m-dh))!}\,
  \frac{(l(m-dh))!}{(lm)!}\,
  \frac{d^{lh}(lm/d)!}{(lm/d-lh)!}  \notag\\
&=\frac{m!}{h!(m-h)!}\,
  \frac{(l(m-h))!}{(lm)!}\,
  \frac{d^{lh}(lm/d)!}{(lm/d-lh)!} .
\label{ezb}
\end{align}
Indeed, after $i$ selected half-edges have already been assigned to distinct vertices, the $d-1$ other half-edges incident to the next selected half-edge must be chosen from the half-edges outside the selected hyperedges and not already used in these exposed vertex-classes.  This gives the product in the first line of \eqref{ezb}; the factorial form follows by collecting the corresponding factors.

For fixed $d,l$ and $\beta$, Stirling's formula applied to \eqref{ezb} gives
\begin{equation}
\mathbb{E}Z_{m\beta}=m^{-1/2}e^{m\Phi_{d,l}(\beta)+O(1)},
\label{a1m}
\end{equation}
where $\Phi_{d,l}$ is defined in \eqref{pdb}.  Differentiating \eqref{pdb},
\begin{align}
\Phi_{d,l}'(\beta)
&=-\ln\beta-(l-1)\ln(1-\beta)+l\ln(1-d\beta),
\label{1dp}\\
\Phi_{d,l}''(\beta)
&=-\frac{1}{\beta}
  -\frac{ld-d\beta-(l-1)}{(1-\beta)(1-d\beta)}<0,
\qquad 0<\beta<\frac1d .
\label{2dp}
\end{align}

\begin{lemma}\label{lm21}
The following hold.
\begin{enumerate}
\item $\Phi_{d,l}$ has a unique maximizer $\beta_*\in(0,1/d)$, and $\beta_*$ is the solution of \eqref{eq1}.  In particular,
\[
\max_{0<\beta<1/d}\Phi_{d,l}(\beta)=\Phi_{d,l}(\beta_*).
\]
\item If
\[
\frac{(l-1)(d-1)}{d}\ln\left(\frac{d-1}{d}\right)
-\frac1d\ln\left(\frac1d\right)
=:f_l\left(\frac1d\right)\geq 0,
\]
then $\Phi_{d,l}(\beta)>0$ for every $\beta\in(0,1/d)$.
\item If $f_l(1/d)<0$, then there is a unique $\beta_0\in(\beta_*,1/d)$ such that
\[
\Phi_{d,l}(\beta_0)=0.
\]
Moreover, $\Phi_{d,l}(\beta)>0$ for $\beta\in(0,\beta_0)$, and $\Phi_{d,l}(\beta)<0$ for $\beta\in(\beta_0,1/d)$.
\end{enumerate}
\end{lemma}

\begin{proof}
By \eqref{1dp},
\[
\lim_{\beta\downarrow0}\Phi_{d,l}'(\beta)=+\infty,
\qquad
\lim_{\beta\uparrow1/d}\Phi_{d,l}'(\beta)=-\infty.
\]
Together with the strict concavity in \eqref{2dp}, this implies that $\Phi_{d,l}'$ has a unique zero in $(0,1/d)$.  The equation $\Phi_{d,l}'(\beta)=0$ is equivalent to
\[
(1-d\beta)^l=\beta(1-\beta)^{l-1},
\]
which is \eqref{eq1}.  Hence this unique zero is $\beta_*$, and it is the unique maximizer of $\Phi_{d,l}$.

The continuous extension of $\Phi_{d,l}$ to the endpoints satisfies
\[
\lim_{\beta\downarrow0}\Phi_{d,l}(\beta)=0
\]
and
\[
\lim_{\beta\uparrow1/d}\Phi_{d,l}(\beta)
=\frac{(l-1)(d-1)}{d}\ln\left(\frac{d-1}{d}\right)
-\frac1d\ln\left(\frac1d\right)
=f_l\left(\frac1d\right).
\]
If $f_l(1/d)\geq0$, strict concavity implies that $\Phi_{d,l}$ lies strictly above the chord joining its endpoint values; hence $\Phi_{d,l}(\beta)>0$ for all $\beta\in(0,1/d)$.

If $f_l(1/d)<0$, then $\Phi_{d,l}(\beta_*)>0$, because $\Phi_{d,l}(0+)=0$ and $\Phi_{d,l}$ initially increases.  Since $\Phi_{d,l}$ is strictly decreasing on $(\beta_*,1/d)$, there is a unique zero $\beta_0\in(\beta_*,1/d)$.  The asserted signs follow from the monotonicity on $(0,\beta_*)$ and $(\beta_*,1/d)$.
\end{proof}

\begin{lemma}\label{l12}
Let $n\in\mathbb{N}$.  Then
\begin{equation}
\sqrt{2\pi n}\left(\frac{n}{e}\right)^n e^{\frac{1}{12n+1}}
\leq n!\leq
\sqrt{2\pi n}\left(\frac{n}{e}\right)^n e^{\frac{1}{12n}}.
\label{bnf}
\end{equation}
Moreover,
\[
\sqrt{2\pi}\, n^{n+1/2}e^{-n}\leq n!\leq e\, n^{n+1/2}e^{-n}.
\]
\end{lemma}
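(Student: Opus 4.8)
The plan is to recognize (\ref{bnf}) as Robbins' sharpening of Stirling's formula and to prove it through the standard monotone-sequence argument, then extract the cruder ``moreover'' bound as a consequence. First I would set $d_n := \ln(n!) - (n+\tfrac12)\ln n + n$, so that, writing $n! = \sqrt{2\pi n}(n/e)^n e^{r_n}$, one has $r_n = d_n - \tfrac12\ln(2\pi)$; thus (\ref{bnf}) is equivalent after exponentiating to the two-sided estimate $\tfrac{1}{12n+1} \le d_n - \tfrac12\ln(2\pi) \le \tfrac{1}{12n}$. The engine of the proof is the exact identity
\begin{equation*}
d_n - d_{n+1} = \left(n+\tfrac12\right)\ln\frac{n+1}{n} - 1 = \sum_{k=1}^\infty \frac{1}{(2k+1)(2n+1)^{2k}},
\end{equation*}
obtained by writing $\ln\frac{n+1}{n} = \ln\frac{1+(2n+1)^{-1}}{1-(2n+1)^{-1}}$ and expanding the logarithm. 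The positivity of every term shows $d_n$ is strictly decreasing, so it converges to some constant $C$.

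Next I would squeeze the tail. Comparing the series term-by-term against a geometric series gives $d_n - d_{n+1} < \tfrac13\sum_{k\ge1}(2n+1)^{-2k} = \tfrac{1}{12}\big(\tfrac1n-\tfrac1{n+1}\big)$, so the sequence $c_n := d_n - \tfrac1{12n}$ is increasing; since $d_n\to C$, this yields $d_n - C < \tfrac1{12n}$. For the lower estimate I would instead show that $b_n := d_n - \tfrac1{12n+1}$ is decreasing, which amounts to $d_n - d_{n+1} > \tfrac{1}{12n+1}-\tfrac1{12n+13}$; this follows from keeping only the leading term $\tfrac1{3(2n+1)^2}$ of the series and verifying the resulting rational inequality. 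Monotonicity together with $b_n\to C$ then gives $d_n - C > \tfrac1{12n+1}$. The constant is identified as $C = \tfrac12\ln(2\pi)$ by the Wallis product, after which exponentiation of the squeeze produces (\ref{bnf}).

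Finally, the ``moreover'' line follows from (\ref{bnf}): the lower bound $\sqrt{2\pi}\,n^{n+1/2}e^{-n}\le n!$ is immediate because $e^{1/(12n+1)}\ge1$, and for the upper bound I would use $n!\le\sqrt{2\pi}\,n^{n+1/2}e^{-n}e^{1/(12n)}$ together with $\sqrt{2\pi}\,e^{1/(12n)}\le e$ for $n\ge2$ (a direct numerical check, since $\sqrt{2\pi}\,e^{1/24}<e$), while the case $n=1$ holds with equality as $1!=e\cdot1\cdot e^{-1}$. I expect the main obstacle to be the refined lower bound $\tfrac{1}{12n+1}$: unlike the upper bound it does not come from a one-line geometric comparison, and the monotonicity of $b_n$ requires a slightly delicate rational inequality with the small values of $n$ checked by hand. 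Everything else is routine, and since this is precisely Robbins' classical result one could alternatively invoke it directly.
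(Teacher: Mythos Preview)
Your proof is correct; it is precisely Robbins' classical argument, and each step (the telescoping identity for $d_n-d_{n+1}$, the geometric comparison for the upper bound, the rational inequality for the lower bound, and the Wallis identification of the constant) goes through as you describe. The paper, however, does not actually prove this lemma: its entire proof is the citation ``See Exercise 3.1.9 of \cite{GS97}.'' So you are supplying a full self-contained argument where the paper simply invokes a reference. What your approach buys is that the manuscript no longer depends on an external exercise for a result used repeatedly in the first- and second-moment computations; what the paper's approach buys is brevity, since the bound is entirely standard. Either is acceptable, and you could equally well replace your write-up with a citation to Robbins (1955) or to the textbook the paper uses.
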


\begin{proof}
See Exercise 3.1.9 of \cite{GS97}.
\end{proof}

\begin{lemma}\label{lem:first-moment-local}
Let $K\subset(0,1/d)$ be compact.  Then
\[
\sup_{\substack{h\in\mathbb{N}:\ h/m\in K}}
\left|
\sqrt{m}\,e^{-m\Phi_{d,l}(h/m)}\mathbb{E}Z_h
-\frac{1}{\sqrt{2\pi(h/m)(1-dh/m)}}
\right|\longrightarrow 0.
\]
In particular, if $\beta\in(0,1/d)$ is fixed and $m\beta\in\mathbb{N}$, then
\[
\lim_{m\to\infty}\sqrt{m}\,e^{-m\Phi_{d,l}(\beta)}\mathbb{E}Z_{m\beta}
=\frac{1}{\sqrt{2\pi\beta(1-d\beta)}}.
\]
\end{lemma}

\begin{proof}
By \eqref{ezb} and \eqref{bnf}, for $1\leq h<m/d$,
\begin{equation}
\frac{B(m,h)e^{m\Phi_{d,l}(h/m)}}
{\sqrt{2m\pi(h/m)(1-dh/m)}}
\leq \mathbb{E}Z_h\leq
\frac{A(m,h)e^{m\Phi_{d,l}(h/m)}}
{\sqrt{2m\pi(h/m)(1-dh/m)}},
\label{azh}
\end{equation}
where
\begin{align*}
\log A(m,h)
&=\frac{1}{12m}+\frac{1}{12l(m-h)}+\frac{d}{12lm}
  -\frac{1}{12h+1}-\frac{1}{12(m-h)+1}\\
&\hspace{2em}
  -\frac{1}{12lm+1}-\frac{d}{12l(m-dh)+d},\\
\log B(m,h)
&=\frac{1}{12m+1}+\frac{1}{12l(m-h)+1}+\frac{d}{12lm+d}
  -\frac{1}{12h}-\frac{1}{12(m-h)}\\
&\hspace{2em}
  -\frac{1}{12lm}-\frac{d}{12l(m-dh)}.
\end{align*}
If $h/m$ stays in a compact subset of $(0,1/d)$, then all denominators in the exponents above are of order $m$, uniformly in $h$.  Hence $A(m,h)\to1$ and $B(m,h)\to1$ uniformly on such compact subsets, and the lemma follows from \eqref{azh}.
\end{proof}

\noindent\textbf{Proof of Theorem \ref{thm}(1).}
Let $H_m=\lfloor m/d\rfloor$.  Since $Z=\sum_{h=0}^{H_m}Z_h$, it remains to evaluate the lattice sum of the estimates in Lemma \ref{lem:first-moment-local} near the unique maximizer $\beta_*$.  Put
\[
a=-\Phi_{d,l}''(\beta_*)>0.
\]
Choose $\delta>0$ such that $[\beta_* -\delta,\beta_*+\delta]\subset(0,1/d)$.  By strict concavity and continuity of $\Phi_{d,l}''$, after decreasing $\delta$ if necessary,
\[
\Phi_{d,l}(\beta)\leq \Phi_{d,l}(\beta_*)-\frac{a}{4}(\beta-\beta_*)^2,
\qquad |\beta-\beta_*|\leq\delta .
\]
The crude form of Stirling's bounds in Lemma \ref{l12}, applied to \eqref{ezb}, gives
\[
\sup_{0\leq h\leq H_m}
\left|\frac1m\log \mathbb{E}Z_h-\Phi_{d,l}\left(\frac{h}{m}\right)\right|=o(1),
\]
where $\Phi_{d,l}$ is understood through its continuous endpoint limits.  On the complement of $[\beta_* -\delta,\beta_*+\delta]$ in $[0,1/d]$, this continuous extension is bounded above by $\Phi_{d,l}(\beta_*)-\eta$ for some $\eta>0$.  Hence the contribution to $e^{-m\Phi_{d,l}(\beta_*)}\mathbb{E}Z$ from $|h/m-\beta_*|>\delta$ is exponentially small.  The contribution from $K/\sqrt{m}<|h/m-\beta_*|\leq\delta$ is bounded by a Gaussian tail and tends to $0$ as $K\to\infty$, uniformly for large $m$.

It is therefore enough to sum over $|h-m\beta_*|\leq K\sqrt{m}$ and then let $K\to\infty$.  For such $h$, Taylor's theorem gives, uniformly in $h$,
\[
\Phi_{d,l}\left(\frac{h}{m}\right)-\Phi_{d,l}(\beta_*)
=\frac{\Phi_{d,l}''(\beta_*)}{2}\left(\frac{h}{m}-\beta_*\right)^2
+O\left(\left|\frac{h}{m}-\beta_*\right|^3\right).
\]
Using Lemma \ref{lem:first-moment-local}, with $x_h=(h-m\beta_*)/\sqrt{m}$, we obtain
\begin{align*}
\lim_{m\to\infty} e^{-m\Phi_{d,l}(\beta_*)}
\sum_{|h-m\beta_*|\leq K\sqrt{m}}\mathbb{E}Z_h
&=\frac{1}{\sqrt{2\pi\beta_*(1-d\beta_*)}}
\int_{-K}^{K} e^{-a x^2/2}\,dx .
\end{align*}
Letting $K\to\infty$ yields
\[
\lim_{m\to\infty}e^{-m\Phi_{d,l}(\beta_*)}\mathbb{E}Z
=\frac{1}{\sqrt{\beta_*(1-d\beta_*)[-\Phi_{d,l}''(\beta_*)]}}.
\]
Finally, by \eqref{2dp},
\[
-\Phi_{d,l}''(\beta_*)
=\frac{1+(ld-d-l)\beta_*}{\beta_*(1-\beta_*)(1-d\beta_*)}.
\]
Thus
\[
\lim_{m\to\infty}e^{-m\Phi_{d,l}(\beta_*)}\mathbb{E}Z
=\sqrt{\frac{1-\beta_*}{1+(ld-d-l)\beta_*}},
\]
as claimed.
\hfill$\Box$

\section{Second moment}\label{sm}

Throughout this section $d,l\geq 2$ are fixed and $lm/d\in\mathbb N$.  We write
$Z_{m\beta}$ for the number of matchings with exactly $m\beta$ hyperedges, and all
statements involving $Z_{m\beta}$ are understood along subsequences for which
$m\beta\in\mathbb N$.

The purpose of this section is to compute the second moment of $Z_{m\beta}$ and to
identify a regime in which the second moment has the same exponential growth rate as
$(\mathbb E Z_{m\beta})^2$.

Let $(M_1,M_2)$ be an ordered pair of matchings, each of size $m\beta$.  Put
\[
A=M_1\cap M_2,
\qquad B=M_1\setminus M_2,
\qquad C=M_2\setminus M_1,
\qquad D=[m]\setminus(M_1\cup M_2),
\]
and write $s=|A|$.  Thus $|B|=|C|=m\beta-s$ and
$|D|=m-2m\beta+s$.  Let $t$ be the number of vertices that are incident to one
half-edge belonging to a hyperedge of $B$ and one half-edge belonging to a hyperedge of
$C$.  Since $M_1$ and $M_2$ are matchings, such a vertex is incident to exactly one
half-edge from $B$ and exactly one half-edge from $C$ among the half-edges in
$M_1\cup M_2$.  Set
\begin{equation}
\rho:=\frac{s}{m},\qquad \theta:=\frac{t}{m}.\label{drt}
\end{equation}

\begin{lemma}\label{lem:second-feasible}
For every pair of matchings as above,
\begin{equation}
0\leq \rho\leq \beta,\qquad 0\leq \theta\leq l\beta-l\rho,\label{ccd1}
\end{equation}
and
\begin{equation}
\left(2\beta-\rho-\frac{\theta}{l}\right)d\leq 1.\label{cd2}
\end{equation}
Equivalently,
\begin{equation*}
\max\left\{0,2lm\beta-ls-\frac{lm}{d}\right\}\leq t\leq lm\beta-ls .
\end{equation*}
\end{lemma}

\begin{proof}
The bounds in \eqref{ccd1} are immediate from the definitions.  It remains to prove
\eqref{cd2}.  There are
\[
ls+2\{l(m\beta-s)-t\}=lm(2\beta-\rho)-2\theta m
\]
half-edges of type $A$, together with those half-edges of $B\cup C$ that do not collide
with a half-edge from the other matching.  No two of these half-edges are incident to the
same vertex.  The vertices containing them therefore account for
$(lm(2\beta-\rho)-2\theta m)d$ half-edges.  The $2t$ colliding half-edges from
$B\cup C$ form $t$ vertices, which account for a further $\theta md$ half-edges, and
these vertices are disjoint from the previous ones.  Since the hypergraph has $lm$
half-edges in total,
\[
(lm(2\beta-\rho)-2\theta m)d+\theta md\leq lm,
\]
which is \eqref{cd2}.
\end{proof}

Let
\begin{equation}
\mathcal R_\beta:=\left\{(\rho,\theta):0<\rho<\beta,
\quad \max\left\{0,2l\beta-l\rho-\frac{l}{d}\right\}<\theta<l\beta-l\rho\right\}.
\label{drb}
\end{equation}

\begin{lemma}[Exact second moment]\label{lem:second-exact}
For $0<\beta<1/d$ and $m\beta\in\mathbb N$,
\begin{equation}
\mathbb E Z_{m\beta}^2=
\sum_{s=0}^{m\beta}\sum_{t=\max\left\{0,2lm\beta-ls-\frac{lm}{d}\right\}}^{lm\beta-ls}
F_\beta(s,t),\label{2ndm}
\end{equation}
where
\begin{equation}\label{dfb}
\begin{aligned}
F_\beta(s,t)
&=\frac{m!(d-1)^t}{s!(m\beta-s)!^2(m-2m\beta+s)!}
\frac{[l(m\beta-s)]!^2}{t![l(m\beta-s)-t]!^2}\\
&\quad \times
\frac{[l(m-2m\beta+s)]!}{(lm)!}
\frac{d^{2lm\beta-ls-t}\left(\frac{lm}{d}\right)!}
{\left(\frac{lm}{d}-2lm\beta+ls+t\right)!}.
\end{aligned}
\end{equation}
Here factorials with integer arguments are interpreted only for admissible pairs $(s,t)$.
\end{lemma}

\begin{proof}
First choose the two edge sets.  The number of ordered pairs $(M_1,M_2)$ with
$|M_1|=|M_2|=m\beta$ and $|M_1\cap M_2|=s$ is
\[
\binom ms\binom{m-s}{m\beta-s}\binom{m-m\beta}{m\beta-s}
=\frac{m!}{s!(m\beta-s)!^2(m-2m\beta+s)!}.
\]
After the edge sets have been chosen, choose the $t$ half-edges of $B$ and the $t$
half-edges of $C$ that collide, and pair them.  This gives
\[
\binom{l(m\beta-s)}t^2t!
=\frac{[l(m\beta-s)]!^2}{t![l(m\beta-s)-t]!^2}
\]
choices, and the factor $(d-1)^t$ chooses the remaining half-edges at the $t$ vertices
where one $B$-half-edge and one $C$-half-edge collide.  The remaining constrained
vertices containing half-edges of $M_1\cup M_2$ are then completed using half-edges from
hyperedges in $D$, and the still-unpaired half-edges are partitioned into vertices.  Dividing
by the total number of configurations gives the final factor
\[
\frac{[l(m-2m\beta+s)]!}{(lm)!}
\frac{d^{2lm\beta-ls-t}\left(\frac{lm}{d}\right)!}
{\left(\frac{lm}{d}-2lm\beta+ls+t\right)!}.
\]
Multiplying the displayed factors gives \eqref{dfb}; summing over the admissible $s,t$
gives \eqref{2ndm}.
\end{proof}

For $(\rho,\theta)\in\mathcal R_\beta$, define
\begin{align}
\Psi_{d,l}(\beta,\rho,\theta)
&=-\rho\log\rho-\theta\log\frac{\theta}{l}
+(l-1)(1-2\beta+\rho)\log(1-2\beta+\rho)+\theta\log(d-1)\notag\\
&\quad +2(l-1)(\beta-\rho)\log(\beta-\rho)
-2l\left(\beta-\rho-\frac{\theta}{l}\right)
\log\left(\beta-\rho-\frac{\theta}{l}\right)\notag\\
&\quad -\frac{l}{d}\left(1-2\beta d+\rho d+\frac{\theta d}{l}\right)
\log\left(1-2\beta d+\rho d+\frac{\theta d}{l}\right).
\label{pdp}
\end{align}
As usual, $x\log x$ is interpreted as $0$ at $x=0$ when boundary values are considered.

\begin{lemma}[Uniform Stirling estimate]\label{lem:second-stirling}
Let $K\subset\mathcal R_\beta$ be compact.  Then, uniformly for lattice points
with $(s/m,t/m)\in K$,
\begin{align}
F_\beta(s,t)
&=m^{-2}e^{m\Psi_{d,l}(\beta,s/m,t/m)}
\frac{1+O_K(m^{-1})}
{4\pi^2\left(\beta-\frac{s}{m}-\frac{t}{lm}\right)
\sqrt{\frac{s}{m}\frac{t}{m}
\left(1-2\beta d+\frac{sd}{m}+\frac{td}{lm}\right)}}.
\label{eq:second-stirling}
\end{align}
\end{lemma}

\begin{proof}
This follows by applying the uniform Stirling bounds in Lemma \ref{l12} to the factorial
expression \eqref{dfb}.  The compactness assumption keeps all arguments of the logarithms
and all denominator factors bounded away from zero.
\end{proof}

The first derivatives are
\begin{align*}
\partial_\rho\Psi_{d,l}
&=-\log\rho+(l-1)\log(1-2\beta+\rho)-2(l-1)\log(\beta-\rho)\\
&\quad +2l\log\left(\beta-\rho-\frac{\theta}{l}\right)
-l\log\left(1-2\beta d+\rho d+\frac{\theta d}{l}\right),\\
\partial_\theta\Psi_{d,l}
&=-\log\frac{\theta}{l}+2\log\left(\beta-\rho-\frac{\theta}{l}\right)
-\log\left(1-2\beta d+\rho d+\frac{\theta d}{l}\right)+\log(d-1).
\end{align*}
The second derivatives are
\begin{align}
\partial_{\rho\rho}^2\Psi_{d,l}
&=-\frac1\rho+\frac{l-1}{1-2\beta+\rho}
+\frac{2(l-1)}{\beta-\rho}
-\frac{2l}{\beta-\rho-\theta/l}
-\frac{ld}{1-2\beta d+\rho d+\theta d/l},\label{2d1}\\
\partial_{\theta\theta}^2\Psi_{d,l}
&=-\frac1\theta-\frac{2/l}{\beta-\rho-\theta/l}
-\frac{d/l}{1-2\beta d+\rho d+\theta d/l}<0,\label{nsd}\\
\partial_{\rho\theta}^2\Psi_{d,l}
&=-\frac2{\beta-\rho-\theta/l}
-\frac d{1-2\beta d+\rho d+\theta d/l}.\label{eq:mixed-second}
\end{align}
Let
\[
H(\beta,\rho,\theta)=
\begin{pmatrix}
\partial_{\rho\rho}^2\Psi_{d,l} & \partial_{\rho\theta}^2\Psi_{d,l}\\
\partial_{\rho\theta}^2\Psi_{d,l} & \partial_{\theta\theta}^2\Psi_{d,l}
\end{pmatrix}(\beta,\rho,\theta).
\]

The critical point equations are equivalent to
\begin{align}
(d-1)\left(\beta-\rho-\frac{\theta}{l}\right)^2
&=\frac{\theta}{l}\left(1-2\beta d+\rho d+\frac{\theta d}{l}\right),\label{eq3}\\
\rho(d-1)^l(\beta-\rho)^{2(l-1)}
&=\left(\frac{\theta}{l}\right)^l(1-2\beta+\rho)^{l-1}.
\label{eq2}
\end{align}
The point
\begin{equation}
(\rho_\beta,\theta_\beta):=(\beta^2,l(d-1)\beta^2)
\label{sl1}
\end{equation}
satisfies \eqref{eq3}--\eqref{eq2}, belongs to $\mathcal R_\beta$, and
\begin{equation}
\Psi_{d,l}(\beta,\rho_\beta,\theta_\beta)=2\Phi_{d,l}(\beta).
\label{eq:Psi-at-saddle}
\end{equation}

By \eqref{eq2}, any critical point satisfies
\begin{equation}
\frac{\theta}{l}=
\frac{(d-1)\rho^{1/l}(\beta-\rho)^{2(l-1)/l}}
{(1-2\beta+\rho)^{(l-1)/l}}
=:(d-1)a(\rho).
\label{sol}
\end{equation}
Substituting this into \eqref{eq3} gives
\begin{equation*}
h(\rho):=\log a(\rho)+\log\{1-2\beta d+\rho d+d(d-1)a(\rho)\}
-2\log\{\beta-\rho-(d-1)a(\rho)\}=0.
\end{equation*}

For the next two lemmas we record a version of the critical-point argument which keeps
track of the domain on which the logarithm defining $h$ is meaningful.  Recall that, for
$0<s<\beta$,
\[
 t_\beta(s):=
 \frac{(\beta-s)^{1/l}s^{1-2/l}}{(1-\beta-s)^{(l-1)/l}},
\]
and
\begin{align*}
G_\beta(s):={}&(1-(d-1)t_\beta(s))(d+l-dl)s^2
+2\beta(1-\beta d)(1-\beta)(1-(d-1)(l-1)t_\beta(s))\\
&+s\Big[-t_\beta(s)(d-1)\big(\beta l-3\beta d-2l+2\beta^2d+3\beta dl-2\beta^2dl+1\big)\\
&\hspace{35mm}+2\beta^2d-\beta l-\beta d+\beta dl-1\Big].
\end{align*}
Also let $c_0=c_0(\beta)$ be the unique zero in $(0,\beta)$ of
\begin{equation}
\frac{a'(\rho)}{a(\rho)}=\frac1l\left\{\frac1\rho-
\frac{2(l-1)}{\beta-\rho}-\frac{l-1}{1-2\beta+\rho}\right\};
\label{eq:c0-as-zero}
\end{equation}
equivalently,
\[
 c_0=\frac{3\beta l-2l+1+
 \sqrt{9\beta^2l^2-8\beta^2l-12\beta l^2+10\beta l+4l^2-4l+1}}{2l}.
\]

\begin{lemma}[Uniqueness of the admissible critical point]\label{le33}
Assume $0<\beta<1/d$ and that either
\[
 \beta<\frac{4-\sqrt2}{7},
 \qquad\text{or}\qquad d\ge3 .
\]
Assume further that
\begin{equation}
G_\beta(s)>0,\qquad 0<s<\beta-c_0 .
\label{eq:G-positive}
\end{equation}
Then $\Psi_{d,l}(\beta,\cdot,\cdot)$ has exactly one critical point in
$\mathcal R_\beta$, namely
\[
(\rho,\theta)=(\beta^2,l(d-1)\beta^2).
\]
Equivalently, the equation $h(\rho)=0$ has exactly one solution in the admissible domain
where the corresponding point $(\rho,l(d-1)a(\rho))$ lies in $\mathcal R_\beta$.
\end{lemma}

\begin{proof}
Let
\[
 R(\rho):=\beta-\rho-(d-1)a(\rho),
 \qquad
 S(\rho):=1-2\beta d+\rho d+d(d-1)a(\rho).
\]
After the substitution \eqref{sol}, the point $(\rho,l(d-1)a(\rho))$ belongs to
$\mathcal R_\beta$ if and only if
\begin{equation}
\rho\in\mathcal I_\beta:=\{\rho\in(0,\beta): R(\rho)>0,\; S(\rho)>0\}.
\label{eq:Ibeta}
\end{equation}
On this set $h$ is well-defined and
\[
 h(\rho)=\log a(\rho)+\log S(\rho)-2\log R(\rho).
\]
Notice the useful identity
\begin{equation}
S(\rho)=1-\beta d-dR(\rho),
\label{eq:S-R-identity}
\end{equation}
so that
\[
\mathcal I_\beta=\left\{\rho\in(0,\beta):
0<R(\rho)<\frac{1-\beta d}{d}\right\}.
\]
Moreover, $a(\beta^2)=\beta^2$, and therefore
\[
R(\beta^2)=\beta(1-d\beta)>0,
\qquad
S(\beta^2)=(1-d\beta)^2>0.
\]
Thus $\beta^2\in\mathcal I_\beta$, and direct substitution gives $h(\beta^2)=0$.

We next prove that $h$ is strictly increasing on every connected component of
$\mathcal I_\beta$.  Put
\[
A(\rho):=\frac{a'(\rho)}{a(\rho)}
=\frac1l\left\{\frac1\rho-
\frac{2(l-1)}{\beta-\rho}-\frac{l-1}{1-2\beta+\rho}\right\}.
\]
Under the stated assumption on $\beta$ we have
$\beta-\rho<\sqrt2(1-2\beta+\rho)$ for every $\rho\in(0,\beta)$; when $d\ge3$ this follows
from $\beta<1/d\le1/3<(4-\sqrt2)/7$.  Hence
\[
\frac{d}{d\rho}A(\rho)
=-\frac1l\left\{\frac1{\rho^2}+\frac{2(l-1)}{(\beta-\rho)^2}
-\frac{l-1}{(1-2\beta+\rho)^2}\right\}<0.
\]
Since $A(\rho)\to+\infty$ as $\rho\downarrow0$ and
$A(\rho)\to-\infty$ as $\rho\uparrow\beta$, the point $c_0$ is the unique zero of
$A$; in particular $A(\rho)>0$ on $(0,c_0)$ and $A(\rho)<0$ on $(c_0,\beta)$.

Differentiating $h$ gives, on $\mathcal I_\beta$,
\begin{align*}
 h'(\rho)
&=\frac{d}{S(\rho)}+\frac{2}{R(\rho)}
+A(\rho)\left(1+\frac{d(d-1)a(\rho)}{S(\rho)}
+\frac{2(d-1)a(\rho)}{R(\rho)}\right).
\end{align*}
All factors $R,S,a$ are positive on $\mathcal I_\beta$.  Therefore $h'(\rho)>0$ whenever
$\rho\le c_0$.  If $\rho>c_0$, set $s=\beta-\rho$.  Since
$t_\beta(s)=a(\rho)/(\beta-\rho)$, a direct simplification gives
\begin{equation}
 h'(\rho)=
 \frac{G_\beta(\beta-\rho)}
 {l\rho(1-2\beta+\rho)R(\rho)S(\rho)} .
\label{eq:hprime-G}
\end{equation}
The denominator in \eqref{eq:hprime-G} is positive on $\mathcal I_\beta$, and
\eqref{eq:G-positive} gives $G_\beta(\beta-\rho)>0$ for $\rho>c_0$.  Thus $h'>0$ on
$\mathcal I_\beta$.

It remains only to check that the admissible domain $\mathcal I_\beta$ has a single
connected component.  Let $J$ be a connected component of $\mathcal I_\beta$.  Since $h$
is strictly increasing on $J$, a finite left endpoint of $J$ cannot be a zero of $R$; indeed
$R\downarrow0$ would force $h\to+\infty$ from within $J$, which is incompatible with
monotonicity.  Similarly, a finite right endpoint of $J$ cannot be a zero of $S$, since
$S\downarrow0$ would force $h\to-\infty$ from within $J$.

Suppose that there were two distinct components $J_1<J_2$.  By the preceding paragraph,
the right endpoint of $J_1$ must be a zero of $R$, while the left endpoint of $J_2$ must be
a zero of $S$.  Using \eqref{eq:S-R-identity}, the latter means
$R=(1-\beta d)/d>0$.  Starting from a zero of $R$ at the right endpoint of $J_1$ and moving
towards the left endpoint of $J_2$, continuity would then produce a first point after which
$R>0$; near that point $S>0$ as well, so a component of $\mathcal I_\beta$ would have a
left endpoint at which $R=0$, contradicting the previous paragraph.  Hence
$\mathcal I_\beta$ is connected.

Since $\beta^2\in\mathcal I_\beta$, $h(\beta^2)=0$, and $h$ is strictly increasing on the
connected set $\mathcal I_\beta$, this is the unique admissible solution of $h(\rho)=0$.
Every critical point in $\mathcal R_\beta$ must satisfy \eqref{eq2}, hence \eqref{sol}, and
then $h(\rho)=0$; conversely, \eqref{eq2} and \eqref{eq3} are exactly the exponentiated
critical point equations.  The unique critical point is therefore
$(\beta^2,l(d-1)\beta^2)$.
\end{proof}

\begin{lemma}[$G_\beta$ criterion]\label{le34}
Under the assumptions of Lemma~\ref{le33}, the rate function has a unique
global maximizer on $\overline{\mathcal R_\beta}$.  The maximizer is
\[
(\rho,\theta)=(\beta^2,l(d-1)\beta^2),
\]
and the maximum value is $2\Phi_{d,l}(\beta)$.
\end{lemma}

\begin{proof}
Extend $\Psi_{d,l}$ continuously to the closure of $\mathcal R_\beta$ by the convention
$x\log x=0$ at $x=0$.  The closure is compact, so the extension has a maximizer.
We claim that no maximizer lies on the boundary.  On the boundary piece $\rho=0$, the
inward derivative $\partial_\rho\Psi$ tends to $+\infty$.  On the boundary piece $\theta=0$,
the inward derivative $\partial_\theta\Psi$ tends to $+\infty$.  On the upper boundary
$\beta-\rho-\theta/l=0$, moving slightly inward by decreasing $\theta$ increases
$\Psi$, since $\partial_\theta\Psi\to-\infty$.  On the lower boundary
$1-2\beta d+\rho d+\theta d/l=0$, moving slightly inward by increasing $\theta$ increases
$\Psi$, since $\partial_\theta\Psi\to+\infty$.  The same one-sided perturbations, or their
obvious combinations, handle the boundary corners.  Therefore any maximizer of the
continuous extension lies in the interior $\mathcal R_\beta$.

An interior maximizer is a critical point.  Lemma~\ref{le33} shows that the only critical
point in $\mathcal R_\beta$ is $(\beta^2,l(d-1)\beta^2)$, so this point is the unique global
maximizer.  The value at this point is \eqref{eq:Psi-at-saddle}.
\end{proof}

\begin{lemma}\label{lm33}
Assume $0<\beta<1/d$.  For every $(\rho,\theta)\in\mathcal R_\beta$,
\begin{equation}
\partial_{\rho\rho}^2\Psi_{d,l}(\beta,\rho,\theta)<0.
\label{n2d}
\end{equation}
\end{lemma}

\begin{proof}
Let $S=1-2\beta d+\rho d+\theta d/l$.  Since $(\rho,\theta)\in\mathcal R_\beta$,
$S>0$ and $\theta<l(\beta-\rho)<l/d$.  Therefore
\begin{align*}
\frac{l-1}{1-2\beta+\rho}-\frac{ld}{S}
&=\frac{-l(d-1)+\theta d(l-1)/l-(1-2\beta d+\rho d)}{(1-2\beta+\rho)S}\\
&\leq \frac{-l(d-2)-S}{(1-2\beta+\rho)S}<0.
\end{align*}
Also
\[
\frac{2(l-1)}{\beta-\rho}-\frac{2l}{\beta-\rho-\theta/l}<0.
\]
Combining these two inequalities with \eqref{2d1} proves \eqref{n2d}.
\end{proof}

\begin{lemma}\label{l24}
Assume $0<\beta<1/d$.
\begin{enumerate}
\item If
\begin{equation}
l\leq d,
\label{ld2}
\end{equation}
then the point \eqref{sl1} is a local maximizer for every $0<\beta<1/d$.
\item If
\begin{equation}
\beta<\frac1{1+\sqrt{(d-1)(l-1)}},
\label{bld}
\end{equation}
then the point \eqref{sl1} is a local maximizer.
\end{enumerate}
\end{lemma}

\begin{proof}
At \eqref{sl1},
\begin{align}
\det H(\beta,\beta^2,l(d-1)\beta^2)
=\frac{1-2\beta+\beta^2(d+l-dl)}
{l\beta^4(1-d\beta)^2(1-\beta)^2(d-1)}.
\label{dth}
\end{align}
The numerator can be written as
\[
(1-\beta)^2-(d-1)(l-1)\beta^2.
\]
Thus \eqref{bld} is exactly the condition that \eqref{dth} be positive.  Together with
\eqref{n2d} and \eqref{nsd}, positivity of the determinant implies that the Hessian is
negative definite, proving part (2).  If $l\leq d$ and $0<\beta<1/d$, then
\eqref{bld} holds; this gives part (1).
\end{proof}

\begin{lemma}\label{l04}
If $l=2$, then for every $0<\beta<1/d$ the Hessian matrix
$H(\beta,\rho,\theta)$ is negative definite throughout $\mathcal R_\beta$.
Consequently, \eqref{sl1} is the unique global maximizer of
$\Psi_{d,2}(\beta,\cdot,\cdot)$ on $\mathcal R_\beta$.
\end{lemma}

\begin{proof}
By \eqref{n2d} and \eqref{nsd}, it suffices to prove $\det H>0$.  Put
\begin{align*}
A&=-\frac1\rho+\frac1{1-2\beta+\rho}+\frac2{\beta-\rho},\\
B&=-\frac2{\beta-\rho-\theta/2}-\frac d{1-2\beta d+\rho d+\theta d/2},\\
C&=-\frac1\theta,
\qquad
U=-\frac1\rho+\frac1{1-2\beta+\rho}.
\end{align*}
Then $B,C,U<0$ and
\[
\det H=(A+2B)\left(C+\frac B2\right)-B^2
=AC+B\left(\frac A2+2C\right).
\]
Since $AC=CU-2/[\theta(\beta-\rho)]\geq -2/[\theta(\beta-\rho)]$, and since
$\theta<2(\beta-\rho)$,
\begin{align*}
B\left(\frac A2+2C\right)
&=\frac{BU}{2}+B\left(\frac1{\beta-\rho}-\frac2\theta\right)\\
&\geq \frac4{(\beta-\rho)\theta},
\end{align*}
we get $\det H>0$.  Strict concavity on the convex set $\mathcal R_\beta$, together
with the critical point equations, gives the unique global maximizer.
\end{proof}

\subsection*{\texorpdfstring{The global maximum for $l\geq3$ in the small-density regime}{The global maximum for l >= 3 in the small-density regime}}

In this subsection set $\eta=\theta/l$ and write
\[
\widehat\Psi_\beta(\rho,\eta):=\Psi_{d,l}(\beta,\rho,l\eta),
\qquad
\widehat{\mathcal R}_\beta:=\{(\rho,\eta):(\rho,l\eta)\in\mathcal R_\beta\}.
\]
Thus
\[
\widehat{\mathcal R}_\beta=
\left\{(\rho,\eta):0<\rho<\beta,
\quad b_-(\rho)<\eta<b_+(\rho)\right\},
\]
where
\[
b_-(\rho):=\max\left\{0,2\beta-\rho-\frac1d\right\},
\qquad b_+(\rho):=\beta-\rho.
\]
The determinant of the Hessian with respect to $(\rho,\eta)$ is $l^2\det H(\beta,\rho,l\eta)$, so the sign of the determinant is unchanged by this change of variables.

For $l\geq3$, define
\begin{equation}
\rho_5:=\frac{\beta(1-2\beta)}{l(1-\beta)-1},
\qquad
\rho_3:=\frac{\beta(1-2\beta)}{\sqrt{2l}(1-\beta)-1}.
\label{dr5dr3}
\end{equation}
Under $0<\beta<1/d$ these quantities are positive, and $\rho_5<\rho_3<\beta$.

\begin{lemma}[The curve $\det H=0$]\label{l26}
Assume $d\geq2$, $l\geq3$, and $0<\beta<1/d$.  For $(\rho,\eta)\in\widehat{\mathcal R}_\beta$,
\begin{equation*}
\det H(\beta,\rho,l\eta)=0
\end{equation*}
if and only if
\begin{equation}
\eta=\xi(\rho):=(\beta-\rho)\frac{J(\rho)}{D(\rho)},
\label{dxi-def}
\end{equation}
where
\begin{align}
D(\rho)&=(dl-l-d)\rho^2
+(3\beta l-\beta d-2l+2\beta^2d+\beta dl-2\beta^2dl+1)\rho-2\beta^2+\beta,
\label{dfd}\\
J(\rho)&=(dl-l-d)\rho^2
+(\beta d+\beta l+2\beta^2d-\beta dl-1)\rho
+2\beta^2d-\beta-4\beta^3d+2\beta^2.
\label{dj}
\end{align}
Moreover,
\begin{equation}
\xi'(\rho)=-1+\frac{F(\rho)}{D(\rho)^2},
\label{dxi}
\end{equation}
where
\begin{equation}
F(\rho)=2(\beta d-1)^2(l-1)
\left[(2l(1-\beta)^2-1)\rho^2
-2\beta(1-2\beta)\rho-\beta^2(1-2\beta)^2\right].
\label{dF}
\end{equation}
The equation $F(\rho)=0$ has the unique positive root $\rho_3$ in \eqref{dr5dr3};
$F(\rho)<0$ for $0<\rho<\rho_3$ and $F(\rho)>0$ for $\rho>\rho_3$.
\end{lemma}

\begin{proof}
Substituting the second derivatives \eqref{2d1}--\eqref{eq:mixed-second} into
$\det H(\beta,\rho,l\eta)=0$ and solving for $\eta$ gives \eqref{dxi-def}.  Differentiating
\eqref{dxi-def} gives \eqref{dxi}.  The asserted formula for the unique positive root of
$F$ follows from the displayed quadratic in \eqref{dF}.
\end{proof}

\begin{lemma}[Monotonicity of the determinant curve]\label{l39}
Assume $d\geq2$, $l\geq3$, and $0<\beta<1/d$.  Then $\xi'$ is strictly increasing on
$(\rho_5,\beta)$.
\end{lemma}

\begin{proof}
By \eqref{dxi},
\begin{equation}
\xi''(\rho)=\frac{F'(\rho)D(\rho)-2F(\rho)D'(\rho)}{D(\rho)^3}
=\frac{4(1-\beta d)^2(l-1)K(\rho)}{D(\rho)^3},
\label{eq:xi-second}
\end{equation}
where $K$ is the cubic polynomial defined by the second equality.  A direct calculation gives
\begin{equation}
K'(\rho)=3(dl-d-l)L(\rho),
\qquad
L(\rho):=-2l(1-\beta)^2\rho^2+(\rho+\beta(1-2\beta))^2.
\label{dkr}
\end{equation}
The positive root of $L$ is $\rho_3$, and $L(\rho)>0$ on $(\rho_5,\rho_3)$ while
$L(\rho)<0$ on $(\rho_3,\beta)$.  Hence $K$ is increasing on $(\rho_5,\rho_3)$ and
decreasing on $(\rho_3,\beta)$.  Since $F(\rho_3)=0$ and $F'(\rho_3)>0$,
\[
\xi''(\rho_3)=\frac{F'(\rho_3)}{D(\rho_3)^2}>0.
\]
On the other hand, $D(\rho)<0$ for $\rho\in[\rho_5,\beta]$ because
\[
D(0)=\beta(1-2\beta)>0,
\qquad
D(\beta)=-2\beta(1-\beta)(l-1)(1-\beta d)<0,
\]
and
\[
D(\rho_5)=-\frac{\beta l(1-\beta)(1-2\beta)^2(l-1)(1-\beta d)}{(l(1-\beta)-1)^2}<0.
\]
Thus \eqref{eq:xi-second} implies $K(\rho_3)<0$.  Since $K(\rho_3)$ is the maximum of
$K$ on $(\rho_5,\beta)$, $K(\rho)<0$ throughout this interval.  Together with
$D(\rho)<0$, \eqref{eq:xi-second} gives $\xi''(\rho)>0$ on $(\rho_5,\beta)$.
\end{proof}

\begin{lemma}[Shape of the negative-definiteness region]\label{l313}
Assume $d\geq2$, $l\geq3$, and
\begin{equation}
0<\beta\leq \frac1{dl-d-l+2}=\frac1{(d-1)(l-1)+1}.
\label{bl1}
\end{equation}
Then the part of the curve $\det H(\beta,\rho,l\eta)=0$ lying in
$\overline{\widehat{\mathcal R}_\beta}$ is precisely
\begin{equation*}
\Gamma_\beta:=\{(\rho,\xi(\rho)): \rho_5\leq \rho\leq \beta\}.
\end{equation*}
It meets the upper boundary $\eta=b_+(\rho)$ only at
$(\rho_5,\beta-\rho_5)$ and $(\beta,0)$.  Moreover
\begin{equation}
b_-(\rho)\leq \mathcal L(\rho)\leq \xi(\rho)<b_+(\rho),
\qquad \rho_5<\rho<\beta,
\label{eq:line-xi-order}
\end{equation}
where $\mathcal L$ is the line through the saddle and $(\beta,0)$,
\begin{equation}
\mathcal L(\rho):=\frac{(d-1)\beta}{1-\beta}(\beta-\rho).
\label{eq:line-L}
\end{equation}
Consequently $\widehat{\mathcal R}_\beta\cap\{\det H>0\}$ is connected; it is the portion
of $\widehat{\mathcal R}_\beta$ lying below $\Gamma_\beta$, with the convention that for
$0<\rho<\rho_5$ the whole vertical section of $\widehat{\mathcal R}_\beta$ belongs to
$\{\det H>0\}$.
\end{lemma}

\begin{proof}
Let $\rho_1$ be the unique zero of $D$ in $(0,\beta)$.  The uniqueness follows from
$D(0)>0$, $D(\rho_5)<0$, $D(\beta)<0$, and the fact that $D$ is an upward-opening
quadratic because $dl-l-d=(d-1)(l-1)-1>0$; hence $\rho_1<\rho_5$ and $D<0$ on $(\rho_1,\beta)$.  The equation
$\xi(\rho)=b_+(\rho)$ is equivalent to $J(\rho)=D(\rho)$, and
\begin{equation}
J(\rho)-D(\rho)=2(\beta d-1)
\{\beta(1-2\beta)-[l(1-\beta)-1]\rho\}.
\label{eq:J-minus-D}
\end{equation}
Thus $\xi=b_+$ has the unique solution $\rho_5$ in $(0,\beta)$, and also the endpoint
solution $\rho=\beta$.

We next show that no other branch of $\xi$ enters the feasible region.  If $\rho<\rho_1$,
then $D(\rho)>0$ and $\rho<\rho_3$, so \eqref{dxi} gives $\xi'(\rho)<-1$.  If
$\beta\leq1/(2d)$, then $b_-(\rho)=0$ for all $\rho$ and
$\xi(0)=\beta(2\beta d-1)\leq0$, whence $\xi(\rho)<b_-(\rho)$ on $(0,\rho_1)$.
Assume instead that $\beta>1/(2d)$, and put $r_0=2\beta-1/d$.  On
$[0,\min\{r_0,\rho_1\})$ we have $b_-(\rho)=2\beta-\rho-1/d$ and
\[
\frac{d}{d\rho}\{\xi(\rho)-b_-(\rho)\}=\frac{F(\rho)}{D(\rho)^2}<0,
\qquad
\xi(0)-b_-(0)=\frac{(\beta d-1)(2\beta d-1)}d<0.
\]
Thus $\xi<b_-$ on this interval.  If $r_0<\rho_1$, then $\xi(r_0)<0$ and the inequality
$\xi'<-1$ gives $\xi(\rho)<0=b_-(\rho)$ for $r_0<\rho<\rho_1$.  Hence
$\xi(\rho)<b_-(\rho)$ for every $0<\rho<\rho_1$.  For $\rho_1<\rho<\rho_5$, the
right-hand side of \eqref{eq:J-minus-D} is negative while $D<0$, so $J/D>1$ and hence
$\xi(\rho)>b_+(\rho)$.  Therefore the only part of the curve that lies in the closed
feasible region is the branch $\rho\in[\rho_5,\beta]$.

It remains to prove \eqref{eq:line-xi-order}.  First, $\beta\leq1/(dl-d-l+2)\leq1/(d+1)$
for $d\geq2,l\geq3$, and a direct calculation gives
\[
\mathcal L(0)-\left(2\beta-\frac1d\right)
=-\frac{(\beta d-1)(\beta d+\beta-1)}{d(\beta-1)}\geq0.
\]
Since both $\mathcal L(\rho)$ and $2\beta-\rho-1/d$ are affine in $\rho$, and
$\mathcal L'(\rho)=-{(d-1)\beta}/{(1-\beta)}>-1$, we have
$b_-(\rho)\leq\mathcal L(\rho)$ for all $0\leq\rho\leq\beta$.

Next,
\[
\xi'(\beta)=-\frac1{l-1}.
\]
By \eqref{bl1},
\[
-\frac1{l-1}\leq -\frac{(d-1)\beta}{1-\beta}=\mathcal L'(\rho).
\]
Since $\xi'$ is strictly increasing on $(\rho_5,\beta)$ by Lemma \ref{l39},
$\xi'(\rho)<\mathcal L'(\rho)$ for $\rho_5<\rho<\beta$.  As $\xi(\beta)=\mathcal L(\beta)=0$,
integrating from $\rho$ to $\beta$ yields $\mathcal L(\rho)<\xi(\rho)$ for
$\rho_5<\rho<\beta$.  Finally, \eqref{eq:J-minus-D} and $D<0$ imply
$\xi(\rho)<b_+(\rho)$ for $\rho_5<\rho<\beta$.  This proves \eqref{eq:line-xi-order}.

The sign assertion follows because the saddle
$(\beta^2,(d-1)\beta^2)$ lies on $\mathcal L$ and hence below $\Gamma_\beta$; at the
saddle, \eqref{dth} is positive under \eqref{bl1}.  Since the zero set in the feasible
region is exactly the graph $\Gamma_\beta$, the region below this graph is the connected
component on which $\det H>0$.
\end{proof}

\begin{lemma}[Star-shapedness and the maximum on $\det H\geq0$]\label{lm39}
Assume $d\geq2$, $l\geq3$, and \eqref{bl1}.  Then
\begin{equation*}
\max_{(\rho,\theta)\in\mathcal R_\beta\cap\{\det H\geq0\}}
\Psi_{d,l}(\beta,\rho,\theta)=2\Phi_{d,l}(\beta),
\end{equation*}
and the maximizer is unique, namely $(\beta^2,l(d-1)\beta^2)$.
\end{lemma}

\begin{proof}
Work in the $(\rho,\eta)$-plane.  Let
$z_0=(\beta^2,(d-1)\beta^2)$, and let $S_\beta$ be the closed subset of
$\overline{\widehat{\mathcal R}_\beta}$ on which $\det H\geq0$ and which contains
$z_0$.  By Lemma \ref{l313}, $S_\beta$ is the feasible region below the graph
$\Gamma_\beta$.

We first prove that $S_\beta$ is star-shaped with respect to $z_0$.  The feasible region
$\overline{\widehat{\mathcal R}_\beta}$ is convex, so only the determinant boundary
$\Gamma_\beta$ has to be checked.  Fix $u\in[\rho_5,\beta]$.  Because
$\beta\leq1/(dl-d-l+2)\leq1/l$, we have $\beta^2\leq\rho_5\leq u$.  By Lemma \ref{l39},
$\xi$ is convex on $(\rho_5,\beta)$; hence its tangent at $u$ lies below its graph.
Furthermore, the point $z_0$ lies below that tangent.  Indeed, since
$\xi'(u)\leq\mathcal L'$ and $\beta^2-u\leq0$, while
$\mathcal L(u)\leq\xi(u)$,
\[
(d-1)\beta^2=\mathcal L(\beta^2)
=\mathcal L(u)+\mathcal L'(\beta^2-u)
\leq \xi(u)+\xi'(u)(\beta^2-u).
\]
Therefore the segment from $z_0$ to $(u,\xi(u))$ lies below the tangent at $u$, and hence
below the graph of $\xi$.  This proves the star-shapedness.

For any $z\in S_\beta\setminus\{z_0\}$, let $\gamma$ be the line segment from $z_0$ to
$z$.  The interior of $\gamma$ lies in the region where $\det H>0$.  Since
$\partial_{\rho\rho}^2\widehat\Psi_\beta<0$ and $\det H>0$ there, the Hessian of
$\widehat\Psi_\beta$ is negative definite along the interior of $\gamma$.  The first
derivative of $\widehat\Psi_\beta$ in the direction of $\gamma$ is zero at $z_0$ and then
strictly decreases along the segment.  Hence
\[
\widehat\Psi_\beta(z)<\widehat\Psi_\beta(z_0)=2\Phi_{d,l}(\beta)
\]
for every $z\ne z_0$ in $S_\beta$.
\end{proof}

\begin{lemma}[Global maximum for $l\geq3$]\label{lm313}
Assume $d\geq2$, $l\geq3$, \eqref{bl1}, and
\begin{equation}
\Phi_{d,l}(\beta)>0.
\label{eq:Phi-positive}
\end{equation}
Then
\begin{equation*}
\sup_{(\rho,\theta)\in\mathcal R_\beta}
\Psi_{d,l}(\beta,\rho,\theta)=2\Phi_{d,l}(\beta),
\end{equation*}
and $(\beta^2,l(d-1)\beta^2)$ is the unique maximizer in $\mathcal R_\beta$.
\end{lemma}

\begin{proof}
By Lemma \ref{lm39}, the supremum over $\mathcal R_\beta\cap\{\det H\geq0\}$ is
$2\Phi_{d,l}(\beta)$ and is attained only at the saddle.  It remains to consider
$\det H<0$.  An interior maximum in this region is impossible, because at such a point
the Hessian would have to be negative semidefinite, whereas $\det H<0$ makes it
indefinite.  The boundary $\det H=0$ is already controlled by Lemma \ref{lm39}.  By
Lemma \ref{l313}, the remaining boundary of the region $\det H<0$ inside
$\widehat{\mathcal R}_\beta$ is the upper boundary $\eta=\beta-\rho$ with
$\rho\in[\rho_5,\beta]$.

Along this boundary put
\begin{equation}
T_{d,l,\beta}(\rho):=\Psi_{d,l}(\beta,\rho,l(\beta-\rho)).
\label{dtr}
\end{equation}
Then
\begin{align*}
T_{d,l,\beta}(\rho)
&=-\rho\log\rho+(l-1)(1-2\beta+\rho)\log(1-2\beta+\rho)
+l(\beta-\rho)\log(d-1)\\
&\quad +(l-2)(\beta-\rho)\log(\beta-\rho)
-\frac ld(1-\beta d)\log(1-\beta d),
\end{align*}
and
\begin{equation}
T_{d,l,\beta}''(\rho)
=-\frac1\rho+\frac{l-1}{1-2\beta+\rho}+\frac{l-2}{\beta-\rho}
=\frac{[l(1-\beta)-1]\rho-\beta(1-2\beta)}
{\rho(\beta-\rho)(1-2\beta+\rho)}.
\label{dtr2}
\end{equation}
Thus $T_{d,l,\beta}$ is convex on $[\rho_5,\beta)$.  Its supremum on this interval is
therefore the larger of its endpoint limits.  The endpoint at $\rho_5$ lies on
$\det H=0$ and is controlled by Lemma \ref{lm39}.  At $\rho=\beta$,
\[
\lim_{\rho\uparrow\beta}T_{d,l,\beta}(\rho)=\Phi_{d,l}(\beta)<2\Phi_{d,l}(\beta)
\]
by \eqref{eq:Phi-positive}.  Hence the supremum over $\det H<0$ is at most
$2\Phi_{d,l}(\beta)$, and equality can occur only at the saddle.
\end{proof}

\begin{lemma}[Second-moment asymptotic]\label{l08}
Assume
\begin{equation}
\Psi_{d,l}(\beta,\beta^2,l(d-1)\beta^2)=2\Phi_{d,l}(\beta)>0.
\label{bm}
\end{equation}
Assume also that one of the following conditions holds:
\begin{enumerate}
\item $l=2$ and $0<\beta<1/d$; or
\item $l\geq3$ and $0<\beta\leq (dl-d-l+2)^{-1}$.
\end{enumerate}
Then
\begin{equation}
\lim_{m\to\infty}
\frac{\mathbb E Z_{m\beta}^2}{(\mathbb E Z_{m\beta})^2}
=\frac{1-\beta}{\sqrt{\beta^2(d+l-dl)-2\beta+1}}.
\label{cm}
\end{equation}
\end{lemma}

\begin{proof}
Let $z_\beta=(\beta^2,l(d-1)\beta^2)$ and write
$\Psi_\beta(\rho,\theta)=\Psi_{d,l}(\beta,\rho,\theta)$.  Under either hypothesis,
$z_\beta$ is the unique point in $\mathcal R_\beta$ at which the supremum of
$\Psi_\beta$ is equal to $2\Phi_{d,l}(\beta)$, and the Hessian
$H_\beta:=H(\beta,z_\beta)$ is negative definite.  In the case $l=2$ this follows from
Lemma \ref{l04}; in the case $l\ge3$ it follows from Lemma \ref{lm313}.  Moreover
\eqref{eq:Psi-at-saddle} gives $\Psi_\beta(z_\beta)=2\Phi_{d,l}(\beta)$.

We first make the contribution away from the saddle explicit.  Extend
$\Psi_\beta$ continuously to the compact polygon
\[
  \overline{\mathcal R_\beta}=
  \left\{0\le\rho\le\beta,
  \max\left\{0,2l\beta-l\rho-\frac ld\right\}\le \theta\le l\beta-l\rho
  \right\}
\]
by the convention $u\log u=0$ at $u=0$.  The boundary analysis in
Lemmas~\ref{l04} and~\ref{lm313} shows that $z_\beta$ is the unique maximizer of
this continuous extension with value $2\Phi_{d,l}(\beta)$.  Therefore, for every
fixed $\delta>0$ small enough, there exists $\eta_\delta>0$ such that
\begin{equation}\label{eq:l08-away-gap}
  \sup\{\Psi_\beta(z):z\in\overline{\mathcal R_\beta},
        \|z-z_\beta\|\ge\delta\}
  \le 2\Phi_{d,l}(\beta)-\eta_\delta .
\end{equation}
The crude form of Stirling's formula applied to the exact expression \eqref{dfb}
gives a polynomially prefactored upper bound, uniformly over all admissible lattice
points,
\begin{equation}\label{eq:l08-crude-bound}
  F_\beta(s,t)
  \le m^C\exp\left\{m\Psi_\beta(s/m,t/m)+o(m)\right\},
\end{equation}
with $C$ depending only on $d,l$ and $\beta$.  Combining
\eqref{eq:l08-away-gap} and \eqref{eq:l08-crude-bound}, and using that the number of
admissible pairs $(s,t)$ is $O(m^2)$, shows that the contribution from
$\| (s/m,t/m)-z_\beta\|\ge\delta$ is
$O(m^{C+2}e^{m(2\Phi_{d,l}(\beta)-\eta_\delta+o(1))})$, hence is exponentially
smaller than the saddle contribution.

It remains to evaluate a small neighborhood of $z_\beta$.  Choose $\delta>0$ so that
the closed ball $B_\delta(z_\beta)$ is contained in $\mathcal R_\beta$ and the Hessian
of $\Psi_\beta$ is negative definite throughout this ball.  Taylor's formula and
negative definiteness give, uniformly for $\|z-z_\beta\|\leq\delta$,
\[
\Psi_\beta(z)=2\Phi_{d,l}(\beta)+\frac12(z-z_\beta)H_\beta(z-z_\beta)^T
+O(\|z-z_\beta\|^3),
\]
and also
\[
\Psi_\beta(z)\leq 2\Phi_{d,l}(\beta)-c\|z-z_\beta\|^2
\]
for some $c>0$.  Hence the part of this neighborhood outside an
$L/\sqrt m$-neighborhood of $z_\beta$ is bounded by a Gaussian tail, uniformly in $m$,
and vanishes as $L\to\infty$.

On the $L/\sqrt m$-neighborhood, set
\[
x_1=\sqrt m\left(\frac{s}{m}-\beta^2\right),
\qquad
x_2=\sqrt m\left(\frac{t}{m}-l(d-1)\beta^2\right).
\]
Using Lemma \ref{lem:second-stirling} and then the Riemann-sum limit gives
\begin{align*}
\mathbb E Z_{m\beta}^2
&=\frac{e^{2m\Phi_{d,l}(\beta)}}{m}
\frac1{4\pi^2\beta^3(1-d\beta)^2\sqrt{l(d-1)}}
\int_{\mathbb R^2}\exp\left\{\frac12 xH_\beta x^T\right\}\,dx\,(1+o(1))\\
&=\frac{e^{2m\Phi_{d,l}(\beta)}}{m}
\frac{1}{2\pi\beta^3(1-d\beta)^2\sqrt{l(d-1)}\sqrt{\det H_\beta}}
(1+o(1)).
\end{align*}
By Lemma \ref{lem:first-moment-local},
\[
(\mathbb E Z_{m\beta})^2=
\frac{e^{2m\Phi_{d,l}(\beta)}}{m}
\frac1{2\pi\beta(1-d\beta)}(1+o(1)).
\]
Finally, \eqref{dth} gives
\[
\det H_\beta=
\frac{\beta^2(d+l-dl)-2\beta+1}
{l\beta^4(1-d\beta)^2(1-\beta)^2(d-1)}.
\]
Substitution yields \eqref{cm}.
\end{proof}

\begin{corollary}[Compact-uniform second-moment ratio]\label{cor:uniform-second-ratio-L1}
Let $K\subset(0,1/d)$ be compact.  Suppose that either $l=2$, or $l\ge3$ and
$K\subset(0,(dl-d-l+2)^{-1}]$ with $\Phi_{d,l}(\beta)>0$ for every $\beta\in K$.
Then the convergence in \eqref{cm} is uniform for $\beta\in K$ along admissible
subsequences $m\beta\in\mathbb N$.  Equivalently, if $h_m/m\in K$ and
$h_m/m\to\beta\in K$, then
\[
  \frac{\mathbb E Z_{h_m}^2}{(\mathbb E Z_{h_m})^2}
  \longrightarrow
  \frac{1-\beta}{\sqrt{\beta^2(d+l-dl)-2\beta+1}}.
\]
\end{corollary}

\begin{proof}
All ingredients in the proof of Lemma~\ref{l08} are compact-uniform in $\beta$.
The saddle remains a positive distance from the boundary on compact subsets, the
Hessian determinant at the saddle is bounded away from zero by Lemma~\ref{l24},
and the gap estimate \eqref{eq:l08-away-gap} is uniform by compactness and the
uniqueness of the maximizer.  The Stirling estimates in Lemmas~\ref{l12}, \ref{lem:first-moment-local}, and
\ref{lem:second-stirling} are uniform on compact sets.  The same discrete Laplace
argument therefore gives the stated uniform convergence.
\end{proof}

\section{Subgraph Conditioning}\label{sc}

Throughout this section \(d,l\ge2\) are fixed, \(lm/d\in\mathbb N\), and
\(m\to\infty\) through admissible values.  We also fix
\(\beta\in(0,1/d)\) and consider only subsequences for which \(m\beta\in\mathbb N\).
The estimates below are uniform for fixed choices of all cycle lengths and
multiplicities.  More precisely, the error terms are compact-uniform in
\(\beta\) as long as \(\beta\) stays in a compact subset of \((0,1/d)\); this
uniformity is recorded explicitly at the end of the section because it is used
in Section~\ref{fe} for density sequences \(h_m/m\to\beta\).

It is convenient to view a \((d,l)\)-regular hypergraph as its incidence
multigraph: one class consists of the \(m\) hyperedges, the other class consists
of the \(lm/d\) vertices, and each half-edge gives one incidence between a
hyperedge and a vertex.  A \(k\)-cycle of the hypergraph is a cycle of length
\(2k\) in this incidence multigraph.  Equivalently, it is an alternating sequence
\[
  v_0,e_1,v_1,e_2,\ldots,v_{k-1},e_k,v_k=v_0,
\]
where the \(e_i\) are distinct hyperedges, the \(v_0,\ldots,v_{k-1}\) are distinct
vertices, and the two incidences of \(e_i\) with \(v_{i-1}\) and \(v_i\) are realised
by two distinct half-edges of \(e_i\).  For \(k=1\) this means a pair of distinct
half-edges of one hyperedge which are incident to the same vertex.  Thus a
hyperedge with \(r\) half-edges at one vertex contributes \(\binom r2\) one-cycles.
Let \(C_k\) be the number of \(k\)-cycles.

For \(b\ge1\) and \(\mathbf c=(c_1,\ldots,c_b)\in\mathbb N^b\), put
\[
  \Omega_{\mathbf c}
  :=\{G\in\Omega_{m,d,l}: C_k(G)=c_k,\ 1\le k\le b\},
\]
and
\[
  E_{\beta,\mathbf c}
  :=\mathbb E\bigl(Z_{m\beta}\mid G_{m,d,l}\in\Omega_{\mathbf c}\bigr).
\]
Let \(M_0\subseteq[m]\) be a fixed set of \(m\beta\) hyperedges, and let
\(A_{M_0}\) be the event that \(M_0\) is a matching.  By symmetry of the labelled
hyperedges, for every \(M\subseteq[m]\) with \(|M|=m\beta\),
\[
  \mathbb P(M\text{ is a matching})=\mathbb P(A_{M_0}),
\]
and, conditional on the event that \(M\) is a matching, the distribution of the
cycle vector \((C_1,\ldots,C_b)\) is the same as under \(A_{M_0}\).  Hence
\begin{equation}\label{eq:ebc-exact}
  E_{\beta,\mathbf c}
  =\mathbb E Z_{m\beta}\,
    \frac{\mathbb P\bigl((C_1,\ldots,C_b)=\mathbf c\mid A_{M_0}\bigr)}
         {\mathbb P\bigl((C_1,\ldots,C_b)=\mathbf c\bigr)}.
\end{equation}
Indeed,
\[
  E_{\beta,\mathbf c}
  =\sum_{|M|=m\beta}\mathbb P(M\text{ is a matching}
     \mid (C_1,\ldots,C_b)=\mathbf c),
\]
and Bayes' rule gives \eqref{eq:ebc-exact}.

We first compute the limiting mean of \(C_k\) under the conditioning \(A_{M_0}\).
Fix \(k\ge1\).  Let \(t\) be the number of hyperedges of the cycle that lie in
\(M_0\).  Since \(M_0\) is a matching, no two such hyperedges are adjacent along
the cycle; hence \(0\le t\le\lfloor k/2\rfloor\).  If \(t\ge1\), let \(s\) be the
number of cycle vertices which are not incident, along the cycle, to a
hyperedge of \(M_0\), but which contain one further half-edge belonging to
\(M_0\).  Then \(0\le s\le k-2t\).  The case \(t=0\) is interpreted in the same way,
with \(0\le s\le k\).

Let \(\mathcal E^{(m)}_{\beta,k}(t,s)\) be the contribution to
\(\mathbb E(C_k\mid A_{M_0})\) from cycles of type \((t,s)\).  A direct exposure of
the finitely many vertices on the cycle gives the following exact expressions.
For \(t\ge1\),
\begin{align}
\mathcal E^{(m)}_{\beta,k}(t,s)
&=\frac{(m\beta)!(k-t-1)!}{2t!(m\beta-t)!}[l(l-1)]^k(d-1)^k(d-2)^s
   \frac{(m-m\beta)!}{(m-m\beta-k+t)!} \notag\\
&\quad\times
   \frac{(lm\beta-2t)!}{(lm\beta-2t-s)!(k-2t-s)!s!}
   \frac{(lm-lm\beta-2k+2t)!}{(lm(1-\beta))!} \notag\\
&\quad\times
   d^{k-2t-s}
   \frac{\bigl(\frac{lm}{d}(1-d\beta)\bigr)!}
        {\bigl(\frac{lm}{d}-(lm\beta+k-2t-s)\bigr)!} .
\label{eq:Ebeta-k-ts}
\end{align}
For \(t=0\),
\begin{align}
\mathcal E^{(m)}_{\beta,k}(0,s)
&=\frac{(m-m\beta)!}{2(m-m\beta-k)!}(k-1)![l(l-1)]^k(d-1)^k(d-2)^s
   \frac{(lm\beta)!}{s!(lm\beta-s)!(k-s)!} \notag\\
&\quad\times
   \frac{(lm-lm\beta-2k)!}{(lm(1-\beta))!}
   d^{k-s}
   \frac{\bigl(\frac{lm}{d}(1-d\beta)\bigr)!}
        {\bigl(\frac{lm}{d}-(lm\beta+k-s)\bigr)!} .
\label{eq:Ebeta-k-0s}
\end{align}
The factors have the following meaning.  We choose and cyclically arrange the
cycle hyperedges, choose the ordered pair of half-edges used by the cycle in
each chosen hyperedge, choose the vertices at which adjacent cycle half-edges
meet, and finally divide by the number of completions compatible with
\(A_{M_0}\).  Since only \(O(1)\) half-edges are exposed, the factorial ratios above
are valid for all sufficiently large \(m\).
Consequently
\begin{equation}\label{eck}
  \mathbb E(C_k\mid A_{M_0})
  =\sum_{t=0}^{\lfloor k/2\rfloor}\sum_{s=0}^{k-2t}
     \mathcal E^{(m)}_{\beta,k}(t,s).
\end{equation}

\begin{lemma}\label{l30}
For fixed \(k\),
\begin{equation}\label{eq:cond-cycle-mean}
  \mathbb E(C_k\mid A_{M_0})
  \longrightarrow
  \mu_{\beta,k}
  :=\frac{(d-1)^k(l-1)^k}{2k}
    \left(1+\frac{(-1)^k\beta^k}{(1-\beta)^k}\right).
\end{equation}
\end{lemma}

\begin{proof}
From \eqref{eq:Ebeta-k-ts}, for \(t\ge1\) and fixed \(t,s,k\),
\begin{align*}
\mathcal E^{(m)}_{\beta,k}(t,s)
&=(1+o(1))\,
\frac{(d-1)^k(l-1)^k(1-d\beta)^k}{2(1-\beta)^k}
\binom{k-t-1}{t-1}\binom{k-2t}{s}\frac1t \\
&\quad\times
\frac{\beta^{t+s}(1-\beta)^t(d-2)^s}{(1-d\beta)^{2t+s}}.
\end{align*}
Equivalently, since
\(t^{-1}\binom{k-t-1}{t-1}=(k-t)^{-1}\binom{k-t}{t}\),
\begin{align*}
\sum_{s=0}^{k-2t}\mathcal E^{(m)}_{\beta,k}(t,s)
&=(1+o(1))\,
\frac{(d-1)^k(l-1)^k}{2(1-\beta)^k}
\frac1{k-t}\binom{k-t}{t} \\
&\quad\times
\beta^t(1-\beta)^t(1-2\beta)^{k-2t}.
\end{align*}
Similarly, \eqref{eq:Ebeta-k-0s} gives
\[
  \sum_{s=0}^{k}\mathcal E^{(m)}_{\beta,k}(0,s)
  =(1+o(1))\,
  \frac{(d-1)^k(l-1)^k}{2k(1-\beta)^k}(1-2\beta)^k .
\]
Thus, with
\[
  \alpha=\frac{\beta(1-\beta)}{(1-2\beta)^2},
\]
we obtain
\begin{align*}
\mathbb E(C_k\mid A_{M_0})
&=(1+o(1))\,
\frac{(d-1)^k(l-1)^k(1-2\beta)^k}{2(1-\beta)^k}
\sum_{t=0}^{\lfloor k/2\rfloor}
  \frac1{k-t}\binom{k-t}{t}\alpha^t .
\end{align*}
The elementary coefficient identity
\[
\sum_{t=0}^{\lfloor k/2\rfloor}\frac1{k-t}\binom{k-t}{t}\alpha^t
 =\frac{(1-\beta)^k+(-1)^k\beta^k}{k(1-2\beta)^k}
\]
follows, for example, by extracting the coefficient of \(x^k\) in
\(-\log(1-x(1+x)/\alpha)\).  Substituting this identity proves
\eqref{eq:cond-cycle-mean}.
\end{proof}

Let
\begin{equation}\label{elk}
  \lambda_k:=\frac{(d-1)^k(l-1)^k}{2k},\qquad k\ge1.
\end{equation}
Then \eqref{eq:cond-cycle-mean} can be written as
\begin{equation}\label{ecc}
  \mu_{\beta,k}=\lambda_k\left(1+\frac{(-1)^k\beta^k}{(1-\beta)^k}\right).
\end{equation}

Let \(N_{M_0}\) be the number of configurations in which \(M_0\) is a matching.  By
\eqref{ezb}, or directly by completing the vertices containing the half-edges
of \(M_0\) and then partitioning the remaining half-edges,
\[
  N_{M_0}
  =\frac{(lm(1-\beta))!}
    {d^{\frac{lm}{d}(1-d\beta)}\bigl(\frac{lm}{d}(1-d\beta)\bigr)!}.
\]

For fixed distinct positive integers \(k_1,\ldots,k_g\) and non-negative integers
\(r_1,\ldots,r_g\), let \(N^{\mathrm{dis}}_{(r_1,k_1),\ldots,(r_g,k_g)}\) be the
number of configurations counted by \(N_{M_0}\) together with, for each \(i\), an
ordered list of \(r_i\) labelled \(k_i\)-cycles, such that all these selected cycles
are pairwise disjoint in the incidence multigraph.  Let
\(N^{\mathrm{ov}}_{(r_1,k_1),\ldots,(r_g,k_g)}\) be the corresponding number of
configurations with at least one intersection among the selected cycles.
Here an intersection means sharing at least one incidence-graph vertex, i.e.
sharing a hyperedge or a hypergraph vertex.

\begin{lemma}\label{l31}
For fixed distinct \(k_1,\ldots,k_g\) and fixed \(r_1,\ldots,r_g,d,l\),
\[
  \frac{N^{\mathrm{dis}}_{(r_1,k_1),\ldots,(r_g,k_g)}}{N_{M_0}}
  \longrightarrow
  \prod_{i=1}^g \mu_{\beta,k_i}^{\,r_i}.
\]
\end{lemma}

\begin{proof}
Expose the selected cycles one at a time, recording for each cycle the pair
\((t,s)\) used above.  Since the total number of exposed hyperedges and vertices
is bounded independently of \(m\), after any finite number of disjoint cycles has
been exposed all populations appearing in \eqref{eq:Ebeta-k-ts} and
\eqref{eq:Ebeta-k-0s} are changed by only \(O(1)\).  Therefore the conditional
contribution of the next disjoint \(k\)-cycle of type \((t,s)\) is
\(\mathcal E^{(m)}_{\beta,k}(t,s)(1+O(m^{-1}))\), uniformly over the finitely many
possible types.  Multiplying these factors and summing over the finitely many
type choices gives
\[
  \frac{N^{\mathrm{dis}}_{(r_1,k_1),\ldots,(r_g,k_g)}}{N_{M_0}}
  =\prod_{i=1}^g
   \left(\sum_{t=0}^{\lfloor k_i/2\rfloor}\sum_{s=0}^{k_i-2t}
      \mathcal E^{(m)}_{\beta,k_i}(t,s)\right)^{r_i}(1+o(1)).
\]
The result follows from Lemma \ref{l30}.
\end{proof}

\begin{lemma}\label{l32}
For fixed distinct \(k_1,\ldots,k_g\) and fixed \(r_1,\ldots,r_g,d,l\),
\[
  \frac{N^{\mathrm{ov}}_{(r_1,k_1),\ldots,(r_g,k_g)}}{N_{M_0}}\longrightarrow0.
\]
\end{lemma}

\begin{proof}
The total length of the selected cycles is fixed.  If the selected cycles are
not disjoint in the incidence multigraph, then their union has at least one
identification among the hyperedge- or vertex-nodes which would be distinct in
the disjoint case.  Thus the number of choices for the labelled hyperedges and
vertices of the union is smaller by a factor \(O(m^{-1})\) than in the disjoint
case.  The probability that the required finite incidence pattern is realised,
conditioned on \(A_{M_0}\), is of the same order as for a disjoint pattern with
the same number of incidences, because only \(O(1)\) half-edges are exposed and
all remaining pools have size \(\Theta(m)\).  The constants in these
\(O(\cdot)\) bounds depend only on the fixed cycle lengths and multiplicities;
if \(\beta\) is restricted to a compact subset of \((0,1/d)\), all exposed
matching and non-matching half-edge pools have size \(\Theta(m)\) uniformly in
\(\beta\).  Thus each finite overlap pattern has probability at most a constant
multiple of the corresponding disjoint-pattern probability after accounting for
the lost free vertex or hyperedge choice.  Summing over the finitely many
overlap patterns gives
\(N^{\mathrm{ov}}_{(r_1,k_1),\ldots,(r_g,k_g)}=O(m^{-1})
N^{\mathrm{dis}}_{(r_1,k_1),\ldots,(r_g,k_g)}\), which proves the lemma.
\end{proof}

\begin{lemma}\label{l33}
For each fixed \(b\in\mathbb N\) the following hold as \(m\to\infty\).
\begin{enumerate}
\item The vector \((C_1,\ldots,C_b)\) converges in distribution to a vector of
independent Poisson random variables with means \(\lambda_1,\ldots,\lambda_b\).
\item Conditional on \(A_{M_0}\), the vector \((C_1,\ldots,C_b)\) converges in
distribution to a vector of independent Poisson random variables with means
\(\mu_{\beta,1},\ldots,\mu_{\beta,b}\).
\end{enumerate}
\end{lemma}

\begin{proof}
We prove the conditional statement; the unconditional one is the same argument
with no distinguished matching, equivalently with \(\beta=0\), in which case
\(\mu_{0,k}=\lambda_k\).  For arbitrary fixed distinct \(k_1,\ldots,k_g\le b\) and
\(r_1,\ldots,r_g\ge0\),
\[
  \mathbb E\left[\prod_{i=1}^g (C_{k_i})_{r_i}\,\middle|\,A_{M_0}\right]
  =\frac{N^{\mathrm{dis}}_{(r_1,k_1),\ldots,(r_g,k_g)}
        +N^{\mathrm{ov}}_{(r_1,k_1),\ldots,(r_g,k_g)}}{N_{M_0}},
\]
where \((x)_r=x(x-1)\cdots(x-r+1)\).  Lemmas \ref{l31} and \ref{l32} show that
these mixed factorial moments converge to
\(\prod_i\mu_{\beta,k_i}^{r_i}\), which are the mixed factorial moments of
independent Poisson variables.  Since Poisson laws are determined by their
factorial moments, the conditional convergence follows.
\end{proof}

Combining \eqref{eq:ebc-exact} with Lemma \ref{l33}, we obtain the following
fixed-\(b\) asymptotic.  For each fixed
\(\mathbf c=(c_1,\ldots,c_b)\in\mathbb N^b\),
\begin{equation}\label{ebc}
  E_{\beta,\mathbf c}
  =\left(\prod_{k=1}^b
      e^{\lambda_k-\mu_{\beta,k}}
      \left(\frac{\mu_{\beta,k}}{\lambda_k}\right)^{c_k}\right)
    \mathbb E Z_{m\beta}\,(1+o(1)).
\end{equation}

We shall also need three elementary consequences of these Poisson limits.

\begin{lemma}\label{lmp}
Let \(X\) be a Poisson random variable with mean \(\mu\).  Then, for
\(0<\varepsilon<1\),
\[
  \mathbb P\{X<\mu(1-\varepsilon)\}\le e^{-\mu\varepsilon^2/2},
\]
and, for \(\varepsilon>0\),
\[
  \mathbb P\{X>\mu(1+\varepsilon)\}
  \le \left[e^\varepsilon(1+\varepsilon)^{-(1+\varepsilon)}\right]^\mu .
\]
\end{lemma}

\begin{proof}
This is the standard Chernoff bound for Poisson random variables; see, for
example, Theorem A.15 of \cite{as92}.
\end{proof}

\begin{lemma}\label{lem:poisson-upper-tail-uniform}
Let \(a>0\) and \(W>2\).  For every \(y>0\) and every \(\lambda>0\),
\begin{equation}\label{eiq}
\left(\frac{e^{\frac{y}{a\sqrt\lambda}}}
{\left(1+\frac{y}{a\sqrt\lambda}\right)^{1+\frac{y}{a\sqrt\lambda}}}
\right)^\lambda
\le
 e^{-\frac{y^2}{4a^2}}
 +e^{-\frac{|\phi(1/2)|y^2}{a^2W^2}}
 +e^{-(\log(1+W)-1)\frac{y\sqrt\lambda}{a}},
\end{equation}
where
\[
  \phi(z):=z-(1+z)\log(1+z).
\]
\end{lemma}

\begin{proof}
Put \(z=y/(a\sqrt\lambda)\).  The left side is \(e^{\lambda\phi(z)}\).
If \(0<z\le1/2\), then
\(\log(1+z)\ge z-z^2/2\), and hence
\(\phi(z)\le -z^2/2+z^3/2\le -z^2/4\).  This gives the first term in
\eqref{eiq}.  If \(1/2<z\le W\), then \(\phi\) is decreasing on \((0,\infty)\), so
\(\phi(z)\le\phi(1/2)<0\); since \(\lambda=y^2/(a^2z^2)\ge y^2/(a^2W^2)\), this
gives the second term.  If \(z>W\), then
\(\phi(z)\le -z(\log(1+z)-1)\le -z(\log(1+W)-1)\), giving the third term.
\end{proof}

For the next two lemmas set
\begin{equation}\label{eq:q-r-def}
  q_\beta:=\frac{\beta}{1-\beta},\qquad
  r_\beta:=(d-1)(l-1)q_\beta^2,
\end{equation}
and note that \(q_\beta<1\) because \(\beta<1/d\le1/2\).  For fixed \(b\) and \(y>0\),
define
\begin{equation}\label{dsy}
  S_b(y):=\left\{\mathbf c\in\mathbb N^b:
  |c_k-\lambda_k|\le y\sqrt{\lambda_k},\ 1\le k\le b\right\}.
\end{equation}
When no confusion is possible we write \(S(y)\) for \(S_b(y)\).

\begin{lemma}\label{l35}
Assume \(r_\beta<1\).  Fix \(b\in\mathbb N\) and \(W>2\).  Put
\[
  \gamma_\beta:=1-q_\beta,
  \qquad A_\beta:=1+q_\beta,
  \qquad \lambda_{*,b}:=\min_{1\le k\le b}\lambda_k,
\]
and
\begin{align*}
\Theta_{\beta,b,W}(y)
:=1&-b\exp\left\{-\frac{\gamma_\beta^2y^2}{8A_\beta^4}\right\}
    -b\exp\left\{-\frac{\gamma_\beta^2y^2}{16A_\beta^4}\right\} \\
&-b\exp\left\{-\frac{|\phi(1/2)|\gamma_\beta^2y^2}{4A_\beta^4W^2}\right\}
    -b\exp\left\{-(\log(1+W)-1)
       \frac{\gamma_\beta^2y\sqrt{\lambda_{*,b}}}{2A_\beta^2}\right\}.
\end{align*}
Then, for all sufficiently large \(y\),
\begin{align*}
\liminf_{m\to\infty}
\frac{\sum_{\mathbf c\in S_b(y)}p_{\mathbf c}E_{\beta,\mathbf c}^2}
     {(\mathbb E Z_{m\beta})^2}
\ge
\Theta_{\beta,b,W}(y)
\left(1-r_\beta^b\right)
\frac1{\sqrt{1-r_\beta}},
\end{align*}
where \(p_{\mathbf c}=\mathbb P((C_1,\ldots,C_b)=\mathbf c)\).
Equivalently,
\[
  \frac1{\sqrt{1-r_\beta}}
  =\frac{1-\beta}{\sqrt{1-2\beta-(dl-d-l)\beta^2}}.
\]
\end{lemma}

\begin{proof}
By \eqref{ebc} and Lemma \ref{l33}, for fixed \(b\) and \(y\),
\begin{align*}
\lim_{m\to\infty}
\frac{\sum_{\mathbf c\in S_b(y)}p_{\mathbf c}E_{\beta,\mathbf c}^2}
     {(\mathbb E Z_{m\beta})^2}
&=\prod_{k=1}^b
  e^{(\mu_{\beta,k}-\lambda_k)^2/\lambda_k}
  \mathbb P\left\{|X_k-\lambda_k|\le y\sqrt{\lambda_k}\right\},
\end{align*}
where the \(X_k\) are independent Poisson variables with means
\[
  \nu_k:=\frac{\mu_{\beta,k}^2}{\lambda_k}
  =\lambda_k\left(1+(-1)^kq_\beta^k\right)^2 .
\]
Let \(a_k:=1+(-1)^kq_\beta^k\).  Then
\(\gamma_\beta\le a_k\le A_\beta\).  For sufficiently large \(y\), the lower-tail
parameter comparing \(\lambda_k-y\sqrt{\lambda_k}\) with \(\nu_k\) is at least
\(y/(2A_\beta^2\sqrt{\lambda_k})\) whenever the lower endpoint is non-negative;
if the endpoint is negative, the lower tail is empty.  Lemma \ref{lmp} gives
\[
  \mathbb P\{X_k<\lambda_k-y\sqrt{\lambda_k}\}
  \le \exp\left\{-\frac{\gamma_\beta^2y^2}{8A_\beta^4}\right\}.
\]
For the upper tail, the corresponding upper-tail parameter is at least
\(y/(2A_\beta^2\sqrt{\lambda_k})\).  Applying \eqref{eiq} with
\(\lambda\) replaced by \(\gamma_\beta^2\lambda_k\) and
\(y\) replaced by \(\gamma_\beta y/(2A_\beta^2)\) gives
\begin{align*}
\mathbb P\{X_k>\lambda_k+y\sqrt{\lambda_k}\}
&\le
\exp\left\{-\frac{\gamma_\beta^2y^2}{16A_\beta^4}\right\}
+
\exp\left\{-\frac{|\phi(1/2)|\gamma_\beta^2y^2}{4A_\beta^4W^2}\right\} \\
&\quad+
\exp\left\{-(\log(1+W)-1)
       \frac{\gamma_\beta^2y\sqrt{\lambda_k}}{2A_\beta^2}\right\}.
\end{align*}
By the union bound over \(1\le k\le b\),
\[
  \prod_{k=1}^b
  \mathbb P\left\{|X_k-\lambda_k|\le y\sqrt{\lambda_k}\right\}
  \ge \Theta_{\beta,b,W}(y).
\]
Finally,
\[
  \prod_{k=1}^b e^{(\mu_{\beta,k}-\lambda_k)^2/\lambda_k}
  =\exp\left\{\sum_{k=1}^b\frac{r_\beta^k}{2k}\right\}
  =\frac1{\sqrt{1-r_\beta}}
     \exp\left\{-\sum_{k>b}\frac{r_\beta^k}{2k}\right\}.
\]
Since
\(\sum_{k>b} r_\beta^k/(2k)\le -\log(1-r_\beta^b)\) for \(0<r_\beta<1\), the last
exponential is at least \(1-r_\beta^b\).  This proves the lemma.
\end{proof}

\begin{lemma}\label{l36}
Let \(S_b(y)\) be defined by \eqref{dsy}.  Then, for every fixed \(b\) and \(W>2\),
\begin{align*}
\limsup_{m\to\infty}\mathbb P\bigl((C_1,\ldots,C_b)\notin S_b(y)\bigr)
&\le
b e^{-y^2/2}+b e^{-y^2/4}+b e^{-|\phi(1/2)|y^2/W^2} \\
&\quad+
 b\exp\left\{-(\log(1+W)-1)y\sqrt{\lambda_{*,b}}\right\},
\end{align*}
where \(\lambda_{*,b}=\min_{1\le k\le b}\lambda_k\).
In particular, for fixed \(b\), the right-hand side tends to \(0\) as \(y\to\infty\).
\end{lemma}

\begin{proof}
By Lemma \ref{l33}(1), \((C_1,\ldots,C_b)\) converges to independent Poisson
variables with means \(\lambda_1,\ldots,\lambda_b\).  The lower-tail bound in
Lemma \ref{lmp} gives \(e^{-y^2/2}\) for each \(k\).  The upper-tail bound in
Lemma \ref{lmp}, followed by \eqref{eiq} with \(a=1\), gives the remaining three
terms.  A union bound over \(k=1,\ldots,b\) completes the proof.
\end{proof}

\begin{lemma}\label{l38}
Assume \(r_\beta<1\), equivalently
\[
  \beta<\frac1{1+\sqrt{(d-1)(l-1)}}.
\]
There exist constants \(A,B>0\), depending only on \(d,l,\beta\), such that for
every fixed \(b\), every \(\mathbf c\in S_b(y)\), and all sufficiently large \(m\),
\[
  E_{\beta,\mathbf c}\ge e^{-(A+By)}\,\mathbb E Z_{m\beta}.
\]
\end{lemma}

\begin{proof}
Let \(\delta_k=(-1)^kq_\beta^k\).  By \eqref{ebc}, uniformly over the finite set
\(S_b(y)\),
\[
  \frac{E_{\beta,\mathbf c}}{\mathbb E Z_{m\beta}}
  =(1+o(1))\prod_{k=1}^b e^{-\lambda_k\delta_k}(1+\delta_k)^{c_k}.
\]
Write \(c_k=\lambda_k+x_k\sqrt{\lambda_k}\) with \(|x_k|\le y\).  Then
\begin{align*}
\log\prod_{k=1}^b e^{-\lambda_k\delta_k}(1+\delta_k)^{c_k}
&=\sum_{k=1}^b \lambda_k\{\log(1+\delta_k)-\delta_k\}
  +\sum_{k=1}^b x_k\sqrt{\lambda_k}\log(1+\delta_k).
\end{align*}
Since \(|\delta_k|\le q_\beta^k\) and \(q_\beta<1\),
\[
  \log(1+u)-u\ge -\frac{u^2}{1-q_\beta},\qquad |u|\le q_\beta,
\]
and
\[
  |\log(1+u)|\le \frac{|u|}{1-q_\beta},\qquad |u|\le q_\beta.
\]
Therefore
\[
\sum_{k=1}^b \lambda_k\{\log(1+\delta_k)-\delta_k\}
\ge -\frac1{1-q_\beta}\sum_{k=1}^{\infty}\lambda_k q_\beta^{2k}>-\infty,
\]
and
\[
\sum_{k=1}^b x_k\sqrt{\lambda_k}\log(1+\delta_k)
\ge -\frac{y}{1-q_\beta}
    \sum_{k=1}^{\infty}\sqrt{\lambda_k}\,q_\beta^k> -\infty\cdot y.
\]
The last series converges exactly when \(r_\beta<1\).  Absorbing the uniform
\(o(1)\) into the constant \(A\) proves the result.
\end{proof}

\begin{lemma}[Compact-uniform cycle estimates]\label{lem:cycle-uniformity}
Fix $b\in\mathbb N$ and fixed cycle lengths and multiplicities.  Let
$K\subset(0,1/d)$ be compact.  Then the limits in Lemmas~\ref{l30}--\ref{l33}
and the fixed-$b$ asymptotic \eqref{ebc} hold uniformly for
$\beta\in K$ along admissible subsequences $m\beta\in\mathbb N$.  If, in addition,
$r_\beta$ is bounded away from $1$ on $K$, then the estimates in Lemmas~\ref{l35}
and~\ref{l38} hold uniformly for $\beta\in K$ after allowing the constants in
Lemma~\ref{l38} to depend on $K$.
\end{lemma}

\begin{proof}
The proofs expose only finitely many hyperedges, vertices and half-edges.  For
$\beta\in K$, every pool appearing in the exposure process has cardinality
$c m+O(1)$ with $c$ bounded above and below away from zero uniformly in
$\beta$.  Hence every factorial ratio used in
\eqref{eq:Ebeta-k-ts}--\eqref{eq:Ebeta-k-0s} admits a uniform expansion of the
form $(cm)^r(1+O_K(m^{-1}))$.  This gives the uniform convergence of the
conditional means, the disjoint factorial moments, the overlapping estimates and
therefore the Poisson limits.  Formula \eqref{ebc} is obtained from the same
factorial-moment convergence by Bayes' rule, so it is also uniform for fixed
$b$ and fixed $\mathbf c$.

If $\sup_{\beta\in K}r_\beta<1$, then also
$\sup_{\beta\in K}q_\beta<1$ and the series
$\sum_k\lambda_k q_\beta^{2k}$ and
$\sum_k\sqrt{\lambda_k}q_\beta^k$ are uniformly convergent on $K$.  The Poisson
tail estimates in Lemmas~\ref{l35} and~\ref{l38} are then uniform after replacing
$\gamma_\beta=1-q_\beta$ by its positive infimum on $K$ and
$A_\beta=1+q_\beta$ by its finite supremum on $K$.
\end{proof}

\section{Free Energy}\label{fe}

In this section we pass from the fixed-density moment estimates and the subgraph
conditioning estimates to convergence in probability of the free energy.  We write the
fixed-density statement in a form which avoids integrality issues: if
$h_m\in\mathbb N$ and $\beta_m=h_m/m$, then $Z_{h_m}$ denotes the number of
matchings with exactly $h_m$ hyperedges.

Recall the notation
\begin{equation}\label{eq:section5-L1}
  L_1:=\frac1{dl-d-l+2}=\frac1{(d-1)(l-1)+1}.
\end{equation}
For $\beta\in(0,1/d)$ set, as in \eqref{eq:q-r-def},
\begin{equation}\label{eq:section5-q-r}
  q_\beta:=\frac{\beta}{1-\beta},\qquad
  r_\beta:=(d-1)(l-1)q_\beta^2.
\end{equation}
The condition $r_\beta<1$ is equivalent to
$\beta<\bigl(1+\sqrt{(d-1)(l-1)}\bigr)^{-1}$.

\begin{lemma}\label{l51}
Let $h_m\in\mathbb N$, put $\beta_m=h_m/m$, and assume that
$\beta_m\to\beta\in(0,1/d)$.  Suppose that one of the following alternatives holds:
\begin{enumerate}
\item $l=2$; or
\item $l\ge3$ and $\beta_m\le L_1$ for all sufficiently large $m$.
\end{enumerate}
Then
\begin{equation}\label{cfe}
  \frac1m\log Z_{h_m}\longrightarrow \Phi_{d,l}(\beta)
\end{equation}
in probability.  In particular, if $m\beta\in\mathbb N$ along a subsequence, the
same conclusion holds with $h_m=m\beta$.
\end{lemma}

\begin{proof}
We first check that $\Phi_{d,l}(\beta)>0$.  If $l=2$, this follows from
Lemma \ref{lm21}(2), since $f_2(1/d)\ge0$.  If $l\ge3$, then
$L_1=1/(dl-d-l+2)$.  Put
\[
B:=dl-d-l+2=(d-1)(l-1)+1.
\]
For $0<u<1$ we have $(1-u)\log(1-u)\ge -u$ and
$-(1-u)\log(1-u)\ge u(1-u)$.  Hence, for every $0<\beta'\le 1/B$,
\begin{align*}
\Phi_{d,l}(\beta')
&\ge \beta'\log\frac1{\beta'}-(l-1)\beta'
   +l\beta'(1-d\beta') \\
&=\beta'\left(\log\frac1{\beta'}+1-ld\beta'\right)
\ge \beta'\left(\log B+1-\frac{ld}{B}\right)>0.
\end{align*}
The last inequality uses $B\ge3$ and $ld/B\le2$, which is equivalent to
$(d-2)(l-2)\ge0$.  Since $\beta_m\to\beta$ and $\beta_m\le L_1$ eventually in
case $l\ge3$, this proves the desired positivity.

By the compact-uniform first moment estimate in Lemma \ref{lem:first-moment-local},
\begin{equation}\label{eq:section5-first-moment-log}
  \frac1m\log \mathbb E Z_{h_m}\longrightarrow \Phi_{d,l}(\beta).
\end{equation}
It remains to prove
\begin{equation}\label{lmb}
  \frac1m\log\frac{Z_{h_m}}{\mathbb E Z_{h_m}}\longrightarrow0
  \quad\text{in probability}.
\end{equation}
The assumptions also imply $r_\beta<1$.  If $l=2$, then for $d=2$ we have
$r_\beta=q_\beta^2<1$, while for $d\ge3$ the inequality $\beta<1/d$ gives
$q_\beta<1/(d-1)$ and hence $r_\beta=(d-1)q_\beta^2<1$.  If $l\ge3$, then
$(d-1)(l-1)>1$ and the bound $\beta_m\le L_1$ implies
$q_\beta\le 1/((d-1)(l-1))$, whence
$r_\beta\le 1/((d-1)(l-1))<1$.

Write
\[
  X_m:=Z_{h_m},\qquad \mathcal F_{m,b}:=\sigma(C_1,\ldots,C_b),\qquad
  X_{m,b}:=\mathbb E(X_m\mid\mathcal F_{m,b}).
\]
All cycle estimates below are applied with $\beta=\beta_m$; the compact-uniform
form needed for this varying-density use is supplied by Lemma~\ref{lem:cycle-uniformity}.

Fix $b\ge1$.  For every fixed $\mathbf c=(c_1,\ldots,c_b)\in\mathbb N^b$, the
asymptotic formula \eqref{ebc}, applied with $\beta_m$ in place of $\beta$, gives
\begin{equation}\label{eq:section5-Ebc-limit}
  \frac{E_{\beta_m,\mathbf c}}{\mathbb E Z_{h_m}}
  \longrightarrow
  \prod_{k=1}^b e^{-\lambda_k\delta_k}(1+\delta_k)^{c_k},
  \qquad
  \delta_k:=(-1)^kq_\beta^k .
\end{equation}
The convergence is uniform over finite sets of $\mathbf c$'s by
Lemma~\ref{lem:cycle-uniformity}.  Lemma
\ref{l33}(1) gives
\[
  (C_1,\ldots,C_b)\Rightarrow (P_1,\ldots,P_b),
\]
where $P_1,\ldots,P_b$ are independent Poisson random variables with means
$\lambda_1,\ldots,\lambda_b$.  Combining this convergence with
\eqref{eq:section5-Ebc-limit} and tightness of $(C_1,\ldots,C_b)$ yields
\begin{equation}\label{eq:section5-Xmb-limit}
  \frac{X_{m,b}}{\mathbb E X_m}\Rightarrow
  Y_b:=\prod_{k=1}^b e^{-\lambda_k\delta_k}(1+\delta_k)^{P_k}.
\end{equation}
Since $q_\beta<1$, all factors in $Y_b$ are positive.

The sequence $Y_b$ converges almost surely to a finite strictly positive random
variable.  Indeed,
\begin{align*}
\log Y_b
&=\sum_{k=1}^b (P_k-\lambda_k)\log(1+\delta_k)
  +\sum_{k=1}^b \lambda_k\{\log(1+\delta_k)-\delta_k\}.
\end{align*}
Because
\begin{equation}\label{eq:section5-l2-summable}
  \sum_{k\ge1}\lambda_k\delta_k^2
  =\sum_{k\ge1}\frac{r_\beta^k}{2k}<\infty,
\end{equation}
the deterministic series is absolutely convergent, and the centred independent
series is convergent in $L^2$ and almost surely.  We denote the limit by
$Y_\infty$; then $Y_\infty\in(0,\infty)$ almost surely.

We next show that the residual $X_m-X_{m,b}$ is negligible on the exponential
scale.  Since $X_{m,b}=\mathbb E(X_m\mid\mathcal F_{m,b})$,
\begin{equation}\label{eq:section5-conditional-var-identity}
  \mathbb E(X_m-X_{m,b})^2
  =\mathbb E X_m^2-\mathbb E X_{m,b}^2.
\end{equation}
The hypotheses of the lemma imply the second moment asymptotic \eqref{cm}; for
$\beta_m\to\beta$ the convergence is uniform by
Corollary~\ref{cor:uniform-second-ratio-L1}.  Hence
\begin{equation}\label{eq:section5-second-ratio}
  \frac{\mathbb E X_m^2}{(\mathbb E X_m)^2}
  \longrightarrow
  R_\beta:=\frac1{\sqrt{1-r_\beta}}.
\end{equation}
On the other hand, fix any $W_0>2$.  Lemma \ref{l35}, applied with $W=W_0$
and with $\beta_m$ in place of $\beta$, and then using the compact-uniform form in
Lemma~\ref{lem:cycle-uniformity} together with $\beta_m\to\beta$,
gives, for every fixed $b$ and after letting its truncation parameter $y\to\infty$,
\begin{equation}\label{eq:section5-truncated-second-lower}
  \liminf_{m\to\infty}\frac{\mathbb E X_{m,b}^2}{(\mathbb E X_m)^2}
  \ge (1-r_\beta^b)R_\beta .
\end{equation}
Combining \eqref{eq:section5-conditional-var-identity},
\eqref{eq:section5-second-ratio}, and \eqref{eq:section5-truncated-second-lower},
we obtain
\begin{equation}\label{eq:section5-residual-variance}
  \limsup_{m\to\infty}
  \frac{\mathbb E(X_m-X_{m,b})^2}{(\mathbb E X_m)^2}
  \le R_\beta r_\beta^b .
\end{equation}

We now prove the two tails in \eqref{lmb}.  The upper tail follows immediately
from Markov's inequality:
\begin{equation}\label{eq:section5-upper-tail}
  \mathbb P\left\{X_m>e^{m\varepsilon}\mathbb E X_m\right\}
  \le e^{-m\varepsilon}.
\end{equation}
For the lower tail, let $\alpha>0$ be arbitrary.  Choose $a>0$ such that
\[
  \mathbb P\{Y_\infty\le 4a\}<\alpha/3.
\]
Then choose $b$ so large that
\begin{equation}\label{eq:section5-choose-b}
  \mathbb P\{|Y_b-Y_\infty|>a\}<\alpha/3,
  \qquad
  \frac{4R_\beta r_\beta^b}{a^2}<\alpha/3.
\end{equation}
For this fixed $b$, the Portmanteau theorem, \eqref{eq:section5-Xmb-limit}, and \eqref{eq:section5-choose-b} give
\begin{align}\label{eq:section5-Xmb-lower-tight}
  \limsup_{m\to\infty}
  \mathbb P\left\{X_{m,b}<2a\,\mathbb E X_m\right\}
  &\le \mathbb P\{Y_b\le 2a\} \notag\\
  &\le \mathbb P\{Y_\infty\le 4a\}+\mathbb P\{|Y_b-Y_\infty|>a\}\le 2\alpha/3.
\end{align}
If $m$ is large enough that $e^{-m\varepsilon}<a$, then
\begin{align*}
\mathbb P\left\{X_m<e^{-m\varepsilon}\mathbb E X_m\right\}
&\le
\mathbb P\left\{X_{m,b}<2a\,\mathbb E X_m\right\} \\
&\quad+
\mathbb P\left\{|X_m-X_{m,b}|>a\,\mathbb E X_m\right\}.
\end{align*}
By Chebyshev's inequality and \eqref{eq:section5-residual-variance}, the limsup of
the second probability is at most $R_\beta r_\beta^b/a^2<\alpha/12$.  Together with
\eqref{eq:section5-Xmb-lower-tight}, this gives
\[
  \limsup_{m\to\infty}
  \mathbb P\left\{X_m<e^{-m\varepsilon}\mathbb E X_m\right\}
  \le \alpha .
\]
Since $\alpha>0$ is arbitrary, the lower tail tends to zero.  Combining this with
\eqref{eq:section5-upper-tail} proves \eqref{lmb}.  Finally,
\eqref{eq:section5-first-moment-log} and \eqref{lmb} imply \eqref{cfe}.
\end{proof}

\noindent\emph{Remark.}
The proof of Lemma \ref{l51} uses the alternatives in its statement only to
invoke the second moment ratio \eqref{cm} and the inequality \(r_\beta<1\).
Thus the same proof applies in any other parameter regime where Section
\ref{sm} establishes \eqref{cm} and \(r_\beta<1\).

\begin{lemma}\label{lem:beta-star-below-L1}
Let $L_1$ be defined by \eqref{eq:section5-L1}.  If $l\ge3$ and
\[
  \Phi'_{d,l}(L_1)\le0,
\]
then $\beta_*\le L_1$.
\end{lemma}

\begin{proof}
By Lemma \ref{lm21}, $\Phi'_{d,l}$ is strictly decreasing on $(0,1/d)$ and
$\beta_*$ is its unique zero.  Hence $\Phi'_{d,l}(L_1)\le0$ implies
$\beta_*\le L_1$.
\end{proof}

\noindent\textbf{Proof of Theorem \ref{thm}(2) in the $L_1$ subcase.}
By Theorem \ref{thm}(1), equivalently by the first moment asymptotic for the total
number of matchings proved in Section \ref{fm},
\begin{equation}\label{eq:section5-total-first-moment}
  \mathbb E Z=e^{m\Phi_{d,l}(\beta_*)+O(1)}.
\end{equation}
Let $\varepsilon>0$.  Markov's inequality gives
\[
  \mathbb P\left\{\frac1m\log Z>\Phi_{d,l}(\beta_*)+\varepsilon\right\}
  \le e^{-m(\Phi_{d,l}(\beta_*)+\varepsilon)}\mathbb E Z
  \longrightarrow0.
\]

For the lower bound, choose integers $h_m$ with $h_m/m\to\beta_*$ and, in the case
$l\ge3$, $h_m/m\le\beta_*$ for all $m$; for instance, take
$h_m=\lfloor m\beta_*\rfloor$.  Lemma \ref{lm21} gives
$\Phi_{d,l}(\beta_*)>0$.  If $l=2$, Lemma \ref{l51} applies directly.  In the $L_1$ subcase for
$l\ge3$ we assume $\beta_*\le L_1$, and therefore $h_m/m\le L_1$ for all
sufficiently large $m$.  Lemma \ref{l51} gives
\[
  \frac1m\log Z_{h_m}\longrightarrow\Phi_{d,l}(\beta_*)
  \qquad\text{in probability}.
\]
Since $Z\ge Z_{h_m}$,
\[
  \mathbb P\left\{\frac1m\log Z<\Phi_{d,l}(\beta_*)-\varepsilon\right\}
  \le
  \mathbb P\left\{\frac1m\log Z_{h_m}<\Phi_{d,l}(\beta_*)-\varepsilon\right\}
  \longrightarrow0.
\]
Combining the upper and lower tail estimates proves
\[
  \frac1m\log Z\longrightarrow\Phi_{d,l}(\beta_*)
\]
in probability in the $L_1$ subcase.  The remaining $L_2$ subcase is completed
in Section~\ref{ptc}, after Lemma~\ref{lem:fixed-density-L2}.\hfill\(\Box\)

\section{Weighted Free Energy}\label{wfe}

In this section we prove the weighted version of the free-energy convergence in the
$L_1$ regime.  Recall that
\[
  L_1:=\frac1{dl-d-l+2}=\frac1{(d-1)(l-1)+1}.
\]
For $x>0$ let
\[
  Z(x):=\sum_{h=0}^{H_m} Z_h x^h,\qquad H_m:=\lfloor m/d\rfloor,
\]
where $Z_h$ denotes the number of matchings with exactly $h$ hyperedges.  Define
\begin{equation}\label{eq:weighted-Delta}
  \Delta_{d,l}(\beta,x):=\Phi_{d,l}(\beta)+\beta\log x,
  \qquad 0<\beta<\frac1d .
\end{equation}
We use the continuous extension of $\Delta_{d,l}(\cdot,x)$ to $[0,1/d]$ when taking
suprema over densities.

\begin{lemma}\label{lem:weighted-maximizer}
For every $x>0$, the function $\Delta_{d,l}(\cdot,x)$ has a unique maximizer
$\beta_*(x)\in(0,1/d)$.  This maximizer is characterized by
\begin{equation}\label{eq:weighted-critical}
  \Phi_{d,l}'(\beta_*(x))+\log x=0,
\end{equation}
or equivalently
\begin{equation}\label{eq:weighted-critical-product}
  x(1-d\beta_*(x))^l=\beta_*(x)(1-\beta_*(x))^{l-1}.
\end{equation}
If $l\ge3$ and
\begin{equation}\label{eq:weighted-L1-condition}
  \Phi_{d,l}'(L_1)+\log x\le0,
\end{equation}
then
\begin{equation}\label{cdbs}
  \beta_*(x)\le L_1 .
\end{equation}
\end{lemma}

\begin{proof}
Differentiating \eqref{eq:weighted-Delta},
\[
  \partial_\beta \Delta_{d,l}(\beta,x)
  =-\log\beta-(l-1)\log(1-\beta)+l\log(1-d\beta)+\log x,
\]
and
\[
  \partial_{\beta\beta}^2\Delta_{d,l}(\beta,x)
  =-\frac1\beta-
  \frac{ld-d\beta-(l-1)}{(1-\beta)(1-d\beta)}<0,
  \qquad 0<\beta<\frac1d .
\]
Moreover,
\[
  \lim_{\beta\downarrow0}\partial_\beta\Delta_{d,l}(\beta,x)=+\infty,
  \qquad
  \lim_{\beta\uparrow1/d}\partial_\beta\Delta_{d,l}(\beta,x)=-\infty .
\]
Hence $\partial_\beta\Delta_{d,l}(\beta,x)$ has a unique zero in $(0,1/d)$, and
strict concavity makes this zero the unique maximizer.  The equation for this zero is
\eqref{eq:weighted-critical}, which is equivalent to
\eqref{eq:weighted-critical-product}.  Since the continuous extension satisfies
$\Delta_{d,l}(0,x)=0$ and the right derivative at $0$ is $+\infty$, the maximum value is
strictly positive.  Finally, if $l\ge3$ and \eqref{eq:weighted-L1-condition} holds, then
the strictly decreasing function $\partial_\beta\Delta_{d,l}(\beta,x)$ is already
non-positive at $L_1$.  Since its unique zero is $\beta_*(x)$, we obtain \eqref{cdbs}.
\end{proof}

\begin{lemma}\label{lem:weighted-first-moment}
For every fixed $x>0$,
\begin{equation}\label{eq:weighted-first-moment}
  \frac1m\log \mathbb E Z(x)
  \longrightarrow
  \Delta_{d,l}(\beta_*(x),x).
\end{equation}
\end{lemma}

\begin{proof}
Let $\overline\Phi_{d,l}$ denote the continuous extension of $\Phi_{d,l}$ to
$[0,1/d]$.  We shall use the following endpoint-uniform estimate.  It follows
from \eqref{ezb} and the Stirling bounds in Lemma~\ref{l12}, exactly as in the
proof of Theorem~\ref{thm}(1):
\begin{equation}\label{eq:weighted-crude-first-moment}
  \sup_{0\le h\le H_m}
  \left|
  \frac1m\log \mathbb E Z_h-
  \overline\Phi_{d,l}\left(\frac hm\right)
  \right|=o(1),
\end{equation}
where $Z_0=1$ and hence the term $h=0$ is interpreted with
$\overline\Phi_{d,l}(0)=0$.  Indeed, one may use the uniform form
$n!=\exp\{n\log n-n+O(\log(n+1))\}$, with $0!=1$, in each factorial in
\eqref{ezb}; the total error is $O(\log m)$, including the endpoint regimes
$h=O(1)$ and $H_m-h=O(1)$.  Therefore
\begin{align*}
\mathbb E Z(x)
&=\sum_{h=0}^{H_m}x^h\mathbb E Z_h  \\
&\le (H_m+1)
\exp\left\{m\sup_{0\le\beta\le1/d}
       \Delta_{d,l}(\beta,x)+o(m)\right\}  \\
&=(H_m+1)\exp\left\{m\Delta_{d,l}(\beta_*(x),x)+o(m)\right\} .
\end{align*}
This gives the upper bound for the logarithmic limit.

For the lower bound, choose any integer sequence $h_m$ with
$h_m/m\to\beta_*(x)$, for instance $h_m=\lfloor m\beta_*(x)\rfloor$.
Since $\beta_*(x)\in(0,1/d)$, the compact-uniform first-moment estimate proved
in Section~\ref{fm} gives
\[
  \frac1m\log\mathbb E Z_{h_m}\longrightarrow \Phi_{d,l}(\beta_*(x)).
\]
Consequently,
\[
  \frac1m\log\left(x^{h_m}\mathbb E Z_{h_m}\right)
  \longrightarrow
  \Phi_{d,l}(\beta_*(x))+\beta_*(x)\log x
  =\Delta_{d,l}(\beta_*(x),x).
\]
Since $\mathbb E Z(x)\ge x^{h_m}\mathbb E Z_{h_m}$, the lower bound follows.
\end{proof}

\noindent\textbf{Proof of Theorem \ref{thm}(3) in the $L_1$ subcase.}
The proof below covers the case $l=2$ and the $L_1$ case $l\ge3$ with
\eqref{eq:weighted-L1-condition}.  The $L_2$ subcase, and hence the full
certified-threshold formulation with $x\le x_{\rm cert}$, is completed in
Section~\ref{ptc}.  Write
\[
  \beta_x:=\beta_*(x),\qquad
  \Delta_x:=\Delta_{d,l}(\beta_x,x)
  =\Phi_{d,l}(\beta_x)+\beta_x\log x .
\]
Let $\varepsilon>0$.  By Markov's inequality and Lemma~\ref{lem:weighted-first-moment},
\begin{align}\label{eq:weighted-upper-tail}
\mathbb P\left\{\frac1m\log Z(x)>\Delta_x+\varepsilon\right\}
&\le e^{-m(\Delta_x+\varepsilon)}\mathbb E Z(x)
\longrightarrow 0 .
\end{align}

It remains to prove the lower tail.  Choose
\[
  h_m:=\lfloor m\beta_x\rfloor,\qquad \beta_m:=\frac{h_m}{m}.
\]
Then $\beta_m\to\beta_x$.  If $l\ge3$, condition
\eqref{eq:weighted-L1-condition} and Lemma~\ref{lem:weighted-maximizer} give
$\beta_x\le L_1$, and hence $\beta_m\le L_1$ for all sufficiently large $m$.  If
$l=2$, no upper restriction on $\beta_m$ is needed.  Lemma~\ref{l51} therefore yields
\begin{equation}\label{eq:weighted-fixed-density-lower}
  \frac1m\log Z_{h_m}
  \longrightarrow
  \Phi_{d,l}(\beta_x)
  \qquad\text{in probability}.
\end{equation}
Since $Z(x)\ge x^{h_m}Z_{h_m}$,
\[
\mathbb P\left\{\frac1m\log Z(x)<\Delta_x-\varepsilon\right\}
\le
\mathbb P\left\{\frac1m\log Z_{h_m}+\beta_m\log x<\Delta_x-\varepsilon\right\}.
\]
Because $\beta_m\log x\to\beta_x\log x$, the event on the right is contained,
for all large $m$, in
\[
\left\{\frac1m\log Z_{h_m}<\Phi_{d,l}(\beta_x)-\frac{\varepsilon}{2}\right\}.
\]
This probability tends to $0$ by \eqref{eq:weighted-fixed-density-lower}.
Combining this lower-tail estimate with
\eqref{eq:weighted-upper-tail} proves
\[
  \frac1m\log Z(x)
  \longrightarrow
  \Phi_{d,l}(\beta_*(x))+\beta_*(x)\log x
\]
in probability.\hfill\(\Box\)

\section{Another Criterion to Guarantee Global Maxima}\label{ptc}

This section proves the additional $L_2$ density regime used in the certified
threshold $L_{\rm cert}$ in Theorem~\ref{thm}.  The argument below replaces the
geometric proof in the original draft by a one-dimensional reduction.  Throughout the section we assume \(d\ge2\), \(l\ge3\), and
\(0<\beta<1/d\).  We use the coordinates
\[
  \eta:=\theta/l,\qquad
  \widehat\Psi_\beta(\rho,\eta):=\Psi_{d,l}(\beta,\rho,l\eta),
\]
and
\[
  \widehat{\mathcal R}_\beta
  :=\{(\rho,\eta):(\rho,l\eta)\in\mathcal R_\beta\}
  =\left\{0<\rho<\beta,\,
  \max\left\{0,2\beta-\rho-\frac1d\right\}<\eta<\beta-\rho\right\}.
\]
Set
\begin{align}
A_{d,l}&:=\frac1d\left(1-
        \sqrt{\frac{d-1}{d^{l/(l-1)}-1}}\right),\label{eq:A-dl}\\[2mm]
B_{d,l}&:=\frac{dl+l^2-2l-d+1}{2dl^2-dl},\label{bca1}\\[2mm]
C_{d,l}&:=\frac1{1+\sqrt{(d-1)(l-1)}}.
\end{align}
Thus
\begin{equation}\label{eq:L2-section7}
  L_2=\min\{A_{d,l},B_{d,l},C_{d,l}\}.
\end{equation}
The following lemma records the elementary algebraic consequences of the definition
of $L_2$ that will be used below.  We keep the three constants
$A_{d,l},B_{d,l},C_{d,l}$ in the statement because $L_2$ is one of the two certified
interval endpoints entering $L_{\rm cert}$, although the one-dimensional reduction
below uses only the consequences listed here.

\begin{lemma}[Consequences of the $L_2$ bound]\label{lem:L2-elementary-consequences}
Assume $d\ge2$, $l\ge3$, and $0<\beta\le L_2$.  Then:
\begin{enumerate}
\item $\beta<C_{d,l}$, and the Hessian at
      $(\beta^2,l(d-1)\beta^2)$ is negative definite.
\item $\beta\le A_{d,l}\le 1/(2l)$, and the function
\[
  A(\rho):=\frac{a'(\rho)}{a(\rho)}
  =\frac1l\left\{\frac1\rho-
  \frac{2(l-1)}{\beta-\rho}-\frac{l-1}{1-2\beta+\rho}\right\}
\]
from \eqref{sol} is positive on $(0,\beta^2]$.
\item
\begin{equation}\label{eq:cpl-derivative}
  \left.\partial_\rho\widehat\Psi_\beta(\rho,\eta)\right|_{(\rho,\eta)=(d\beta^2,0)}
  =(l-1)\log\frac{1-2\beta+d\beta^2}{(1-d\beta)^2}-\log d\le0.
\end{equation}
\item
\begin{equation}\label{eq:Phi-positive-L2}
  \Phi_{d,l}(\beta)>0,
  \qquad 0<\beta\le L_2.
\end{equation}
\end{enumerate}
\end{lemma}

\begin{proof}
A direct comparison of the first and third entries in \eqref{eq:L2-section7}
gives $A_{d,l}<C_{d,l}$ for $d\ge2,l\ge3$.  Thus $L_2<C_{d,l}$.  Hence
$\beta<C_{d,l}$, and the negative definiteness of the Hessian follows from
Lemma~\ref{l24}.

Next, $A_{d,l}\le1/(2l)$.  If $d\ge2l$ this is automatic from the definition of
$A_{d,l}$.  If $d<2l$, it is equivalent, after squaring two positive quantities, to
\[
  4l^2(d-1)\ge (2l-d)^2\bigl(d^{l/(l-1)}-1\bigr).
\]
This last inequality follows from the mean-value theorem applied to
$u\mapsto u^{l/(l-1)}$ on $[1,d]$ and the elementary bound
$\frac{l}{l-1}d^{1/(l-1)}(2l-d)^2\le4l^2$.  Thus $\beta\le1/(2l)$.  On
$(0,\beta^2]$,
\[
  A'(\rho)=-\frac1l\left\{\frac1{\rho^2}+
  \frac{2(l-1)}{(\beta-\rho)^2}-\frac{l-1}{(1-2\beta+\rho)^2}\right\}<0,
\]
because $\beta\le1/(2l)$ implies
$\beta-\rho\le\beta<\sqrt2(1-2\beta+\rho)$.  Therefore $A$ is decreasing on this
interval, and
\[
  \beta^2(1-\beta)^2 l A(\beta^2)=1-2l\beta+l\beta^2>0.
\]
Hence $A(\rho)>0$ for $0<\rho\le\beta^2$.

For \eqref{eq:cpl-derivative}, direct substitution into
$\partial_\rho\widehat\Psi_\beta$ gives the displayed expression.  Exponentiating the
inequality in \eqref{eq:cpl-derivative} shows that it is equivalent to
\[
  1-2\beta+d\beta^2\le d^{1/(l-1)}(1-d\beta)^2 .
\]
Solving this quadratic inequality in $\beta$ gives precisely
$0<\beta\le A_{d,l}$.

Finally, another direct calculation gives $\Phi'_{d,l}(A_{d,l})>0$.  Since
$\Phi'_{d,l}$ is strictly decreasing, $0<\beta\le L_2\le A_{d,l}$ implies
$\Phi'_{d,l}(u)>0$ for every $u\in(0,\beta]$.  With
$\Phi_{d,l}(0+)=0$, this yields \eqref{eq:Phi-positive-L2}.
\end{proof}

\begin{lemma}\label{lem:diagonal-derivative}
Assume \(0<\beta\le A_{d,l}\).  Along the line segment
\[
  \eta=d\beta^2-\rho,
  \qquad \beta^2\le \rho\le d\beta^2,
\]
one has
\begin{equation}\label{eq:g-diagonal-nonpositive}
  \partial_\rho\widehat\Psi_\beta(\rho,d\beta^2-\rho)\le0.
\end{equation}
The inequality is strict except at \(\rho=\beta^2\) and possibly at the endpoint
\(\rho=d\beta^2\).
\end{lemma}

\begin{proof}
Put
\[
  g(\rho):=\partial_\rho\widehat\Psi_\beta(\rho,d\beta^2-\rho),
  \qquad \beta^2\le\rho\le d\beta^2.
\]
At the saddle, \(g(\beta^2)=0\).  By \eqref{eq:cpl-derivative},
\(g(d\beta^2)\le0\).  Moreover
\begin{align*}
  g'(\rho)
  &=\partial_{\rho\rho}^2\widehat\Psi_\beta(\rho,d\beta^2-\rho)
    -\partial_{\rho\eta}^2\widehat\Psi_\beta(\rho,d\beta^2-\rho)\\
  &=-\frac1\rho+\frac{l-1}{1-2\beta+\rho}
    +\frac{2(l-1)}{\beta-\rho}
   =\frac{V(\rho)}{\rho(\beta-\rho)(1-2\beta+\rho)},
\end{align*}
where
\[
  V(\rho)=\rho^2+(l-2\beta-l\beta)\rho+2\beta^2-\beta.
\]
The denominator is positive on \([\beta^2,d\beta^2]\).  The polynomial \(V\) is an
upward-opening quadratic and \(V(0)=-\beta(1-2\beta)<0\); hence \(V\) has exactly one
positive zero.  Thus \(g\) decreases and then increases on the interval.  A function
with this monotonicity pattern has its maximum on a compact interval at one of the two
endpoints.  Since both endpoint values are at most zero, \eqref{eq:g-diagonal-nonpositive}
follows.
\end{proof}

\begin{lemma}\label{lem:vertical-reduction-L2}
Assume \(0<\beta\le L_2\).  For each \(\rho\in(0,\beta)\), the function
\(\eta\mapsto\widehat\Psi_\beta(\rho,\eta)\) has a unique maximizer
\(u_\beta(\rho)\) in the vertical section of \(\widehat{\mathcal R}_\beta\).  If
\[
  M_\beta(\rho):=\widehat\Psi_\beta(\rho,u_\beta(\rho)),
\]
then \(M_\beta\) is strictly increasing on \((0,\beta^2)\) and strictly decreasing on
\((\beta^2,\beta)\).  Consequently,
\[
  M_\beta(\rho)\le M_\beta(\beta^2)=2\Phi_{d,l}(\beta),
\]
with equality only at \(\rho=\beta^2\).
\end{lemma}

\begin{proof}
For fixed \(\rho\), the derivative in the \(\eta\) direction satisfies
\[
  \partial_{\eta\eta}^2\widehat\Psi_\beta(\rho,\eta)<0.
\]
Moreover, \(\partial_\eta\widehat\Psi_\beta\to+\infty\) at the lower endpoint of
the vertical section and
\(\partial_\eta\widehat\Psi_\beta\to-\infty\) at the upper endpoint
\(\eta=\beta-\rho\).  Hence the vertical maximizer \(u_\beta(\rho)\) exists and is
unique.  It is characterized by
\begin{equation}\label{eq:vertical-critical}
  (d-1)(\beta-\rho-u_\beta(\rho))^2
  =u_\beta(\rho)\{1-2\beta d+\rho d+d u_\beta(\rho)\}.
\end{equation}
The implicit function theorem applies, and the envelope theorem gives
\begin{equation}\label{eq:Mprime-envelope}
  M_\beta'(\rho)=\partial_\rho\widehat\Psi_\beta(\rho,u_\beta(\rho)).
\end{equation}

We first treat \(0<\rho<\beta^2\).  Let \(a(\rho)\) be as in \eqref{sol}, and set
\[
  R(\rho):=\beta-\rho-(d-1)a(\rho),
  \qquad
  S(\rho):=1-2\beta d+\rho d+d(d-1)a(\rho).
\]
Since \(A(\rho)=a'(\rho)/a(\rho)>0\) on \((0,\beta^2]\), the function \(R\) is decreasing
there, and so
\[
  R(\rho)>R(\beta^2)=\beta(1-d\beta)>0.
\]
If \(S(\rho)\le0\), then
\[
  (d-1)R(\rho)^2-(d-1)a(\rho)S(\rho)>0.
\]
If \(S(\rho)>0\), then \(S(\rho')>0\) for every
\(\rho'\in[\rho,\beta^2]\), because \(S'(\rho')=d+d(d-1)a'(\rho')>0\) on
\((0,\beta^2]\).  Hence the logarithm
\[
  h(\rho):=\log a(\rho)+\log S(\rho)-2\log R(\rho)
\]
is well-defined on \([\rho,\beta^2]\).  On this interval,
\[
  h'(\rho')
  =\frac d{S(\rho')}+\frac2{R(\rho')}
  +A(\rho')\left(1+\frac{d(d-1)a(\rho')}{S(\rho')}
  +\frac{2(d-1)a(\rho')}{R(\rho')}\right)>0.
\]
Since \(h(\beta^2)=0\), we have \(h(\rho)<0\) for \(\rho<\beta^2\) in this case.  Thus in
all cases
\begin{equation}\label{eq:F-at-a-positive}
  (d-1)R(\rho)^2-(d-1)a(\rho)S(\rho)>0,
  \qquad 0<\rho<\beta^2.
\end{equation}
For fixed \(\rho\), the left-hand side of the vertical equation,
\[
  F_\rho(\eta):=(d-1)(\beta-\rho-\eta)^2
  -\eta(1-2\beta d+\rho d+d\eta),
\]
is strictly decreasing in \(\eta\) on the feasible vertical section.  Since
\eqref{eq:F-at-a-positive} says that
\(F_\rho((d-1)a(\rho))>0\), the zero \(u_\beta(\rho)\) of \(F_\rho\) satisfies
\begin{equation}\label{eq:u-greater-a}
  u_\beta(\rho)>(d-1)a(\rho),
  \qquad 0<\rho<\beta^2.
\end{equation}
Using \eqref{eq:vertical-critical} to simplify
\(\partial_\rho\widehat\Psi_\beta(\rho,u_\beta(\rho))\), we get
\begin{equation}\label{eq:Mprime-a-formula}
  M_\beta'(\rho)=l\log\frac{u_\beta(\rho)}{(d-1)a(\rho)}.
\end{equation}
Together with \eqref{eq:u-greater-a}, this proves \(M_\beta'(\rho)>0\) on
\((0,\beta^2)\).

We next treat \(\rho>\beta^2\).  Let
\[
  z_0:=(\beta^2,(d-1)\beta^2).
\]
If \(u_\beta(\rho)\ge(d-1)\beta^2\), then
\[
  \partial_\rho\widehat\Psi_\beta(\beta^2,u_\beta(\rho))<0,
\]
because \(\partial_\rho\widehat\Psi_\beta(z_0)=0\) and
\(\partial_{\rho\eta}^2\widehat\Psi_\beta<0\).  Since
\(\partial_{\rho\rho}^2\widehat\Psi_\beta<0\) by Lemma~\ref{lm33}, it follows that
\(M_\beta'(\rho)<0\).

It remains to consider the case \(u_\beta(\rho)<(d-1)\beta^2\).  We first note that
\(u_\beta(\rho)\) lies above the line \(\eta=d\beta^2-\rho\).  If
\(d\beta^2-\rho\le0\) this is immediate.  Otherwise, evaluating \(F_\rho\) at
\(d\beta^2-\rho\) gives
\[
  F_\rho(d\beta^2-\rho)=(1-d\beta)^2(\rho-\beta^2)>0.
\]
Since \(F_\rho\) is strictly decreasing and \(F_\rho(u_\beta(\rho))=0\), we obtain
\(u_\beta(\rho)>d\beta^2-\rho\).  Therefore
\[
  \rho_0:=d\beta^2-u_\beta(\rho)
\]
belongs to \([\beta^2,d\beta^2]\), and
\((\rho_0,u_\beta(\rho))\) lies on the diagonal segment considered in
Lemma~\ref{lem:diagonal-derivative}.  That lemma gives
\[
  \partial_\rho\widehat\Psi_\beta(\rho_0,u_\beta(\rho))
  =\partial_\rho\widehat\Psi_\beta(\rho_0,d\beta^2-\rho_0)\le0.
\]
Using again \(\partial_{\rho\rho}^2\widehat\Psi_\beta<0\) and
\(\rho\ge\rho_0\), we obtain
\[
  M_\beta'(\rho)=\partial_\rho\widehat\Psi_\beta(\rho,u_\beta(\rho))<0.
\]
Thus \(M_\beta\) is strictly decreasing on \((\beta^2,\beta)\).

At \(\rho=\beta^2\), the vertical critical equation has the solution
\(u_\beta(\beta^2)=(d-1)\beta^2\), and hence
\(M_\beta(\beta^2)=\widehat\Psi_\beta(\beta^2,(d-1)\beta^2)=2\Phi_{d,l}(\beta)\).
The asserted uniqueness follows from the strict monotonicity on the two sides of
\(\beta^2\).
\end{proof}

\begin{lemma}\label{lem:L2-global-maximum}
Assume \(d\ge2\), \(l\ge3\), and \(0<\beta\le L_2\).  Then
\begin{equation}\label{eq:L2-global-max}
  \sup_{(\rho,\theta)\in\mathcal R_\beta}
  \Psi_{d,l}(\beta,\rho,\theta)=2\Phi_{d,l}(\beta),
\end{equation}
and the unique maximizer is
\[
  (\rho,\theta)=(\beta^2,l(d-1)\beta^2).
\]
Moreover, the Hessian at this point is negative definite and
\begin{equation}\label{eq:L2-r-less-one}
  r_\beta:=(d-1)(l-1)\left(\frac{\beta}{1-\beta}\right)^2<1.
\end{equation}
\end{lemma}

\begin{proof}
The identity \eqref{eq:L2-global-max} and uniqueness follow immediately from
Lemma~\ref{lem:vertical-reduction-L2}, since every point in
\(\widehat{\mathcal R}_\beta\) lies in one of the vertical sections.  The negative
definiteness of the Hessian and \eqref{eq:L2-r-less-one} follow from
\(\beta\le L_2<C_{d,l}\) and Lemma~\ref{l24}.
\end{proof}

\begin{lemma}\label{lem:second-moment-nondegenerate-global}
Let \(0<\beta<1/d\) and assume that
\begin{enumerate}
\item \(\Phi_{d,l}(\beta)>0\);
\item the point
      \[
        (\beta^2,l(d-1)\beta^2)
      \]
      is the unique global maximizer of the continuous extension of
      \(\Psi_{d,l}(\beta,\cdot,\cdot)\) to
      \(\overline{\mathcal R_\beta}\); and
\item the Hessian \(H(\beta,\beta^2,l(d-1)\beta^2)\) is negative definite.
\end{enumerate}
Then, along subsequences for which \(m\beta\in\mathbb N\),
\begin{equation}\label{eq:second-ratio-general}
  \lim_{m\to\infty}
  \frac{\mathbb E Z_{m\beta}^2}{(\mathbb E Z_{m\beta})^2}
  =\frac{1-\beta}{\sqrt{\beta^2(d+l-dl)-2\beta+1}}.
\end{equation}
\end{lemma}

\begin{proof}
This is exactly the discrete two-dimensional Laplace estimate used in the proof of
Lemma~\ref{l08}.  The proof of that lemma uses the hypotheses there only to guarantee
three facts: the saddle value is \(2\Phi_{d,l}(\beta)\), the saddle is the unique global
maximizer, and the Hessian at the saddle is negative definite.  These are precisely the
assumptions above.  The boundary and off-saddle terms are exponentially negligible because the
maximum on \(\overline{\mathcal R_\beta}\) is unique and because
\(\Phi_{d,l}(\beta)>0\) rules out endpoint contributions with the same exponential
order.  The local Gaussian computation at the saddle is unchanged.
It gives the constant in \eqref{eq:second-ratio-general}.
\end{proof}

\begin{lemma}[Local stability of a non-degenerate global maximum]
\label{lem:local-stability-global-max}
Let \(0<\beta_0<1/d\), and put
\[
  z_\beta:=(\beta^2,l(d-1)\beta^2),\qquad
  r_\beta:=(d-1)(l-1)\left(\frac{\beta}{1-\beta}\right)^2 .
\]
Assume that \(\Phi_{d,l}(\beta_0)>0\), that \(r_{\beta_0}<1\), that
\(z_{\beta_0}\) is the unique global maximizer of the continuous extension of
\(\Psi_{d,l}(\beta_0,\cdot,\cdot)\) to \(\overline{\mathcal R_{\beta_0}}\), and that
\(H(\beta_0,z_{\beta_0})\) is negative definite.  Then there is an open interval
\(I\subset(0,1/d)\) containing \(\beta_0\) such that, for every \(\beta\in I\),
\(z_\beta\) is the unique global maximizer of the continuous extension of
\(\Psi_{d,l}(\beta,\cdot,\cdot)\) to \(\overline{\mathcal R_\beta}\),
\(H(\beta,z_\beta)\) is negative definite, \(\Phi_{d,l}(\beta)>0\), and
\(r_\beta<1\).  Moreover, the second-moment ratio in
\eqref{eq:second-ratio-general} holds uniformly for \(\beta\in I\), along
subsequences for which \(m\beta\in\mathbb N\).
\end{lemma}

\begin{proof}
The functions \(\Phi_{d,l}\), \(r_\beta\), \(z_\beta\), and the entries of
\(H(\beta,z_\beta)\) are continuous in \(\beta\).  Thus, after restricting to a
small open interval \(I_0\subset(0,1/d)\) around \(\beta_0\), the inequalities
\(\Phi_{d,l}(\beta)>0\), \(r_\beta<1\), and the negative definiteness of
\(H(\beta,z_\beta)\) hold throughout \(I_0\).  Also, \(z_\beta\) remains an
interior critical point of \(\Psi_{d,l}(\beta,\cdot,\cdot)\) by the critical
point equations \eqref{eq3}--\eqref{eq2}.

We next prove the uniform exponent gap away from the saddle.  Fix
\(\delta>0\) small enough that \(z_\beta\in\mathcal R_\beta\) and
\(B_{2\delta}(z_\beta)\subset\mathcal R_\beta\) for all \(\beta\) in a possibly
smaller neighbourhood of \(\beta_0\).  Suppose that no such gap holds outside
\(B_\delta(z_\beta)\).  Then there exist \(\beta_n\to\beta_0\) and
\(w_n\in\overline{\mathcal R_{\beta_n}}\) such that
\[
  \operatorname{dist}(w_n,z_{\beta_n})\ge\delta,
  \qquad
  \Psi_{d,l}(\beta_n,w_n)
  \ge 2\Phi_{d,l}(\beta_n)-o(1).
\]
The sets \(\overline{\mathcal R_{\beta_n}}\) are contained in the fixed compact
rectangle \([0,1/d]\times[0,l/d]\).  Passing to a subsequence, \(w_n\to w_0\).
The defining inequalities of the closed feasible regions are continuous in
\(\beta\), so \(w_0\in\overline{\mathcal R_{\beta_0}}\).  The convention
\(x\log x=0\) gives a jointly continuous extension of \(\Psi_{d,l}\) to these
compact closures, and hence
\[
  \Psi_{d,l}(\beta_0,w_0)=2\Phi_{d,l}(\beta_0).
\]
The assumed uniqueness of the maximizer at \(\beta_0\) gives
\(w_0=z_{\beta_0}\).  This contradicts the facts that \(w_n\to w_0\),
\(z_{\beta_n}\to z_{\beta_0}\), and
\(\operatorname{dist}(w_n,z_{\beta_n})\ge\delta\).  Therefore, after shrinking
\(I_0\) to an interval \(I\), for every such \(\delta\) there exists
\(\eta_\delta>0\) such that
\[
  \sup\{\Psi_{d,l}(\beta,w):
        w\in\overline{\mathcal R_\beta},
        \operatorname{dist}(w,z_\beta)\ge\delta\}
  \le 2\Phi_{d,l}(\beta)-\eta_\delta,
  \qquad \beta\in I .
\]

It remains only to identify the maximizer inside \(B_\delta(z_\beta)\).  By
continuity of the Hessian, we may choose \(\delta\) and then shrink \(I\) so
that the Hessian of \(\Psi_{d,l}(\beta,\cdot,\cdot)\) is negative definite on
\(B_\delta(z_\beta)\), uniformly for \(\beta\in I\).  Hence
\(\Psi_{d,l}(\beta,\cdot,\cdot)\) is strictly concave on this ball.  Since
\(z_\beta\) is an interior critical point, it is the unique maximizer in the
ball.  Together with the uniform exponent gap outside the ball, this proves that
\(z_\beta\) is the unique global maximizer on \(\overline{\mathcal R_\beta}\) for
all \(\beta\in I\).

The uniform version of \eqref{eq:second-ratio-general} follows from the same
compact-uniform discrete Laplace estimate used in Lemma~\ref{l08}: the exponent
gap above controls the complement of a fixed neighbourhood of the saddle, while
the local Hessian and all Stirling prefactors vary continuously and remain
bounded away from zero and infinity on compact subsets of the neighbourhood.
\end{proof}

\begin{corollary}[Fixed-density convergence under a stable saddle]
\label{cor:stable-saddle-fixed-density}
Assume the hypotheses of Lemma~\ref{lem:local-stability-global-max} at a point
\(\beta_0\in(0,1/d)\).  If \(h_m\in\mathbb N\) and \(h_m/m\to\beta_0\), then
\[
  \frac1m\log Z_{h_m}\longrightarrow \Phi_{d,l}(\beta_0)
\]
in probability.
\end{corollary}

\begin{proof}
For all large \(m\), \(\beta_m:=h_m/m\) lies in the interval \(I\) supplied by
Lemma~\ref{lem:local-stability-global-max}.  That lemma gives the second-moment
ratio uniformly for \(\beta_m\), and also gives \(r_{\beta_m}<1\).  The
compact-uniform cycle estimates of Lemma~\ref{lem:cycle-uniformity} apply to the
same sequence \(\beta_m\).  Repeating the subgraph-conditioning proof of
Lemma~\ref{l51}, with these uniform inputs and with \(h_m\) in place of
\(m\beta\), gives
\[
  \frac1m\log Z_{h_m}-\frac1m\log\mathbb E Z_{h_m}\longrightarrow0
\]
in probability.  Lemma~\ref{lem:first-moment-local} gives
\((1/m)\log\mathbb E Z_{h_m}\to\Phi_{d,l}(\beta_0)\).  This proves the claim.
\end{proof}

\begin{lemma}\label{lem:fixed-density-L2}
Let \(h_m\in\mathbb N\), put \(\beta_m=h_m/m\), and assume
\(\beta_m\to\beta\in(0,1/d)\).  Suppose \(d\ge2\), \(l\ge3\), and
\(\beta_m\le L_2\) for all sufficiently large \(m\).  Then
\begin{equation}\label{eq:fixed-density-L2}
  \frac1m\log Z_{h_m}\longrightarrow \Phi_{d,l}(\beta)
\end{equation}
in probability.
\end{lemma}

\begin{proof}
By \eqref{eq:Phi-positive-L2} and Lemma~\ref{lem:L2-global-maximum}, the hypotheses of
Lemma~\ref{lem:second-moment-nondegenerate-global} hold uniformly on compact subsets of
\(\{0<\beta\le L_2\}\).  Hence the second moment ratio
\eqref{eq:second-ratio-general} holds with \(\beta_m\) in place of \(\beta\).  In addition,
\eqref{eq:L2-r-less-one} gives \(r_\beta<1\).  The proof of Lemma~\ref{l51} applies
verbatim with this second-moment input: the alternatives in Lemma~\ref{l51} are used
only to invoke the second moment ratio and the inequality \(r_\beta<1\).  Therefore
\eqref{eq:fixed-density-L2} follows.
\end{proof}

\noindent\textbf{Proof of Theorem \ref{thm}(3) in the $L_2$ subcase.}
Let \(x>0\) and assume
\begin{equation}\label{eq:weighted-L2-condition}
  \Phi_{d,l}'(L_2)+\log x\le0.
\end{equation}
Let \(\beta_*(x)\) be the unique maximizer of
\[
  \Delta_{d,l}(\beta,x)=\Phi_{d,l}(\beta)+\beta\log x,
  \qquad 0<\beta<1/d.
\]
Since \(\Phi_{d,l}'\) is strictly decreasing, \eqref{eq:weighted-L2-condition} implies
\begin{equation}\label{eq:beta-x-below-L2}
  \beta_*(x)\le L_2.
\end{equation}
Put
\[
  \Delta_*(x):=\Delta_{d,l}(\beta_*(x),x).
\]
By Lemma~\ref{lem:weighted-first-moment},
\begin{equation}\label{eq:section7-weighted-first-moment}
  \frac1m\log\mathbb E Z(x)\longrightarrow \Delta_*(x).
\end{equation}
Thus Markov's inequality gives, for every \(\varepsilon>0\),
\[
  \mathbb P\left\{\frac1m\log Z(x)>\Delta_*(x)+\varepsilon\right\}
  \le e^{-m(\Delta_*(x)+\varepsilon)}\mathbb E Z(x)\longrightarrow0.
\]
For the lower tail, let
\[
  h_m:=\lfloor m\beta_*(x)\rfloor.
\]
Then \(h_m/m\to\beta_*(x)\) and, by \eqref{eq:beta-x-below-L2}, \(h_m/m\le L_2\) for all
large \(m\).  Lemma~\ref{lem:fixed-density-L2} gives
\[
  \frac1m\log Z_{h_m}\longrightarrow \Phi_{d,l}(\beta_*(x))
  \qquad\text{in probability}.
\]
Since \(Z(x)\ge x^{h_m}Z_{h_m}\),
\[
  \liminf_{m\to\infty}\frac1m\log Z(x)
  \ge \Phi_{d,l}(\beta_*(x))+\beta_*(x)\log x
  =\Delta_*(x)
\]
in probability.  Combining the upper and lower tails proves
\[
  \frac1m\log Z(x)\longrightarrow
  \Phi_{d,l}(\beta_*(x))+\beta_*(x)\log x
\]
in probability in the $L_2$ subcase.

Taking $x=1$ in the same argument proves Theorem~\ref{thm}(2) whenever
$l\ge3$ and $\beta_*\le L_2$.  Combining this with the $L_1$ subcase proved in
Section~\ref{fe} proves Theorem~\ref{thm}(2) under the single condition
$\beta_*\le L_{\rm cert}$.  Similarly, the $L_1$ proof in Section~\ref{wfe}
and the $L_2$ proof above prove Theorem~\ref{thm}(3) whenever
$\beta_*(x)\le L_{\rm cert}$, equivalently whenever $0<x\le x_{\rm cert}$.
\hfill\(\Box\)

\bigskip
\noindent\textbf{Proof of Theorem \ref{th12}.}
Assume that
\[
  G_{\beta_*}(s)>0,
  \qquad 0<s<\beta_*-c_0(\beta_*),
\]
where \(\beta_*\) is the root of \eqref{eq1}.  Lemma~\ref{le33}, together with
the boundary argument in the proof of Lemma~\ref{le34}, implies that
\((\beta_*^2,l(d-1)\beta_*^2)\) is the unique global maximizer of the continuous
extension of \(\Psi_{d,l}(\beta_*,\cdot,\cdot)\) to
\(\overline{\mathcal R_{\beta_*}}\).  The Hessian at the saddle is negative definite:
if \(l\le d\) this follows from Lemma~\ref{l24}(1), while
if \(l>d\), writing \(q_*=\beta_*/(1-\beta_*)\), the root equation becomes
\[
  (1-(d-1)q_*)^l=q_*.
\]
Let \(q_0=((d-1)(l-1))^{-1/2}\) and set
\(u=(d-1)q_0=\sqrt{(d-1)/(l-1)}\).  Since \(l>d\), we have \(0<u<1\),
and \(q_0=1/(u(l-1))\).  Hence
\[
  (1-(d-1)q_0)^l=(1-u)^l
  \le (1+u)^{-l}<\frac1{lu}<\frac1{(l-1)u}=q_0,
\]
where we used \((1-u)(1+u)\le1\) and \((1+u)^l>lu\).  Thus
\((1-(d-1)q_0)^l<q_0\).  Since the function
\(q\mapsto (1-(d-1)q)^l-q\) is strictly decreasing on
\((0,(d-1)^{-1})\), we have \(q_*<q_0\).  Lemma~\ref{l24}(2) applies.  Also
\(\Phi_{d,l}(\beta_*)>0\), since \(\beta_*\) is the unique maximizer of
\(\Phi_{d,l}\) and \(\Phi_{d,l}(0+)=0\).
Moreover \(r_{\beta_*}<1\) by the displayed inequality, or trivially from
\(\beta_*<1/d\le C_{d,l}\) when \(l\le d\).  Lemma~\ref{lem:local-stability-global-max}
therefore gives an interval \(I\) containing \(\beta_*\) on which the hypotheses of
Lemma~\ref{lem:second-moment-nondegenerate-global} hold uniformly, and on which
\(r_\beta<1\).  Consequently, the hypotheses of
Corollary~\ref{cor:stable-saddle-fixed-density} hold at \(\beta_*\).  Therefore,
for every integer sequence \(h_m\) with \(h_m/m\to\beta_*\),
\[
  \frac1m\log Z_{h_m}\longrightarrow \Phi_{d,l}(\beta_*)
\]
in probability.  Finally, the proof of Theorem~\ref{thm}(2) in Section~\ref{fe}
applies verbatim:
the upper tail is controlled by the total first moment, and the lower tail follows
from \(Z\ge Z_{\lfloor m\beta_*\rfloor}\).  Hence
\[
  \frac1m\log Z\longrightarrow \Phi_{d,l}(\beta_*)
\]
in probability.
\hfill\(\Box\)

\section{Maximum Matching Size}\label{mm}

In this section we record a simple consequence of the first-moment estimate for large
matchings.  The result concerns the maximum cardinality of a matching; this is different
from an inclusion-maximal matching.  Let
\[
  H_m:=\left\lfloor \frac md\right\rfloor,
  \qquad
  Z_{\ge k}:=\sum_{h=\lceil k\rceil}^{H_m} Z_h,
  \qquad
  \nu_m:=\max\{h: Z_h>0\}.
\]
Assume throughout this section that
\[
 f_l\left(\frac1d\right)<0 .
\]
By Lemma~\ref{lm21}, there is a unique
\(\beta_0\in(\beta_*,1/d)\) such that
\(\Phi_{d,l}(\beta_0)=0\), and
\(\Phi'_{d,l}(\beta_0)<0\).  Put
\[
  a_0:=-\Phi'_{d,l}(\beta_0)>0
\]
and define the real centering
\begin{equation}
  K_m:=m\beta_0+\frac{\log m}{2\Phi'_{d,l}(\beta_0)}
      =m\beta_0-\frac{\log m}{2a_0} .
\label{dkm}
\end{equation}
Since \(K_m\) need not be an integer, the correct statement is the following integer-sequence
version.

\begin{proposition}\label{prop:local-large-matching-first-moment}
Let \(k_m\in\{0,1,\ldots,H_m\}\) be any integer sequence such that
\[
  r_m:=k_m-K_m=O(1).
\]
Then
\[
  \mathbb E Z_{k_m}
  =\frac{\exp\{\Phi'_{d,l}(\beta_0)r_m+o(1)\}}
  {\sqrt{2\pi\beta_0(1-d\beta_0)}} .
\]
In particular, if \(r_m\to r\), then
\[
  \mathbb E Z_{k_m}
  \longrightarrow
  \frac{e^{\Phi'_{d,l}(\beta_0)r}}
  {\sqrt{2\pi\beta_0(1-d\beta_0)}} .
\]
\end{proposition}

\begin{proof}
Set \(\beta_m=k_m/m\).  Then
\[
  \beta_m=\beta_0+\frac{\log m}{2m\Phi'_{d,l}(\beta_0)}+\frac{r_m}{m},
\]
so \(\beta_m\to\beta_0\).  Lemma~\ref{lem:first-moment-local}, equivalently
\eqref{azh}, gives
\[
  \mathbb E Z_{k_m}
  =\frac{e^{m\Phi_{d,l}(\beta_m)}}
  {\sqrt{2\pi m\beta_m(1-d\beta_m)}}(1+o(1)).
\]
Taylor expansion at \(\beta_0\), using \(\Phi_{d,l}(\beta_0)=0\), gives
\[
  m\Phi_{d,l}(\beta_m)
  =\Phi'_{d,l}(\beta_0)
   \left(\frac{\log m}{2\Phi'_{d,l}(\beta_0)}+r_m\right)
   +O\left(\frac{(\log m)^2}{m}\right)
  =\frac{\log m}{2}+\Phi'_{d,l}(\beta_0)r_m+o(1).
\]
The factor \(e^{(\log m)/2}\) cancels the prefactor \(m^{-1/2}\), and the result follows.
\end{proof}

\begin{proposition}\label{prop:large-matching-tail}
Let \(k_m=\lfloor K_m\rfloor\).  Then
\[
  \lim_{C\to\infty}\limsup_{m\to\infty}\mathbb E Z_{\ge k_m+C}=0.
\]
Consequently,
\[
  \lim_{C\to\infty}\limsup_{m\to\infty}
  \mathbb P\{\nu_m\ge k_m+C\}=0.
\]
\end{proposition}

\begin{proof}
Choose \(\varepsilon_0>0\) so small that
\([\beta_0/2,\beta_0+\varepsilon_0]\subset(0,1/d)\).  By Lemma~\ref{lem:first-moment-local},
there is a constant \(A<\infty\) such that, uniformly for
\(h/m\in[\beta_0/2,\beta_0+\varepsilon_0]\),
\begin{equation}
  \mathbb E Z_h
  \le A m^{-1/2}\exp\left\{m\Phi_{d,l}(h/m)\right\} .
\label{eq:tail-local-first-moment}
\end{equation}
Since \(\Phi_{d,l}\) is concave and \(\Phi_{d,l}(\beta_0)=0\), for every
\(u\ge\beta_0\),
\begin{equation}
  \Phi_{d,l}(u)\le \Phi'_{d,l}(\beta_0)(u-\beta_0)=-a_0(u-\beta_0).
\label{eq:tail-concavity}
\end{equation}
Let
\[
  M_m:=\left\lceil \frac{5\log m}{2a_0}\right\rceil .
\]
For \(C\le \delta<M_m\), put \(h=k_m+\delta\).  Then
\(h-K_m=\delta+O(1)\), and the same Taylor expansion used in
Proposition~\ref{prop:local-large-matching-first-moment} gives, uniformly in this range,
\[
  \mathbb E Z_{k_m+\delta}
  \le A_1 e^{-a_0\delta}
\]
for a constant \(A_1<\infty\).  Hence
\begin{equation}
  \sum_{\delta=C}^{M_m-1}\mathbb E Z_{k_m+\delta}
  \le A_2 e^{-a_0 C}.
\label{eq:near-tail-bound}
\end{equation}
For \(h\ge k_m+M_m\) and \(h/m\le\beta_0+\varepsilon_0\), we have
\(h-m\beta_0\ge 2a_0^{-1}\log m+O(1)\).  Combining
\eqref{eq:tail-local-first-moment} and \eqref{eq:tail-concavity},
\[
  \mathbb E Z_h\le A_3 m^{-5/2}
\]
for all sufficiently large \(m\), uniformly over such \(h\).  Thus the contribution of these
indices is \(O(m^{-3/2})\).  Finally, on the remaining interval
\(h/m>\beta_0+\varepsilon_0\), strict concavity and the fact that \(\Phi_{d,l}<0\) on
\((\beta_0,1/d)\) imply that \(\Phi_{d,l}\le-\eta\) for some \(\eta>0\); the crude
Stirling bounds used in Section~\ref{fm} then show that the total contribution from this
range is exponentially small.  Together with \eqref{eq:near-tail-bound}, this proves the
first assertion.  The probability bound follows from Markov's inequality,
\[\mathbb P\{\nu_m\ge k_m+C\}\le \mathbb E Z_{\ge k_m+C}.\]
\end{proof}

\appendix

\section{\texorpdfstring{A Finiteness Observation for the Condition in Theorem~\ref{th12}}{A Finiteness Observation for the Condition in Theorem 1.2}}\label{sect:A}

In this appendix we justify the statement that the hypothesis in Theorem~\ref{th12} can
hold for only finitely many parameter pairs.  Recall that the corrected definition used in
Section~\ref{sm} is
\[
  t_\beta(s)=\frac{(\beta-s)^{1/l}s^{1-2/l}}{(1-\beta-s)^{(l-1)/l}},
  \qquad 0<s<\beta .
\]
The condition in Theorem~\ref{th12} is
\begin{equation}
  G_{\beta_*}(s)>0,\qquad 0<s<\beta_*-c_0(\beta_*),
\label{eq:appendix-G-condition}
\end{equation}
where \(c_0\) is given in Section~\ref{sm}.

\begin{proposition}\label{prop:finite-G-condition}
The condition \eqref{eq:appendix-G-condition} can hold for only finitely many pairs
\((d,l)\) with \(d,l\ge2\).
\end{proposition}

\begin{proof}
Suppose that \((d_n,l_n)\) is a sequence with \(d_n,l_n\ge2\) and
\(\max\{d_n,l_n\}\to\infty\).  We shall show that, after passing to a subsequence, the
condition \eqref{eq:appendix-G-condition} fails for all sufficiently large \(n\).  This rules
out infinitely many admissible pairs.

Write \(\beta_n=\beta_*(d_n,l_n)\), and set
\[
  z_n:=\frac{1-d_n\beta_n}{1-\beta_n}.
\]
The equation defining \(\beta_*\) is equivalent to
\begin{equation}
  (d_n-1)z_n^{l_n}+z_n-1=0,
  \qquad
  \beta_n=\frac{1-z_n}{d_n-z_n}.
\label{eq:z-beta-relation}
\end{equation}
We repeatedly use the explicit expression for \(G_\beta\) from Section~\ref{sm}.  All
error terms below are along the chosen subsequence.

First assume that
\[
  \frac{\log d_n}{l_n}\longrightarrow \alpha\in(0,\infty).
\]
Then \eqref{eq:z-beta-relation} gives
\[
  z_n=e^{-\alpha+o(1)},
  \qquad
  d_n\beta_n=1-e^{-\alpha}+o(1),
  \qquad
  \frac{c_0(\beta_n)}{\beta_n}=o(1).
\]
Fix any \(\zeta\in(0,1)\), and put \(s_n=\zeta\beta_n\).  Then
\(0<s_n<\beta_n-c_0(\beta_n)\) for all large \(n\), and
\[
  t_{\beta_n}(s_n)=\frac{\zeta e^\alpha(1-e^{-\alpha})}{d_n}(1+o(1)).
\]
Substitution in the formula for \(G_\beta\) gives the following expansion.  The
terms not containing \(t_{\beta_n}(s_n)\) contribute
\[
  \frac{2e^{-\alpha}(1-e^{-\alpha})}{d_n}+l_n s_n(1-e^{-\alpha})+O(l_ns_n^2),
\]
whereas the terms containing \(t_{\beta_n}(s_n)\) contribute
\[
  -l_n t_{\beta_n}(s_n)
  \left\{d_ns_n^2+s_ne^{-\alpha}+\frac{2e^{-\alpha}(1-e^{-\alpha})}{d_n}
\right\}
  +o(l_n/d_n).
\]
Using \(s_n=\zeta(1-e^{-\alpha})d_n^{-1}(1+o(1))\) and the displayed expression for
\(t_{\beta_n}(s_n)\), these terms combine to
\begin{equation}
  G_{\beta_n}(s_n)
  =-\frac{l_n}{d_n}\,
  \zeta^2(1-\zeta)(1-e^{-\alpha})^2(e^\alpha-1)(1+o(1))<0
\label{eq:G-alpha-finite}
\end{equation}
for all sufficiently large \(n\).  Thus \eqref{eq:appendix-G-condition} fails in this case.

Next assume that
\[
  \frac{\log d_n}{l_n}\longrightarrow0.
\]
Then \(l_n\to\infty\), \(z_n\to1\), \(d_n\beta_n\to0\),
\(c_0(\beta_n)/\beta_n=o(1)\), and
\begin{equation}
  u_n:=l_n(d_n-1)\beta_n\longrightarrow\infty .
\label{eq:u-to-infty}
\end{equation}
Indeed, if \(w_n=1-z_n\), then \eqref{eq:z-beta-relation} gives
\(w_n=(d_n-1)(1-w_n)^{l_n}\); this implies \(w_n\to0\) and
\(l_nw_n\to\infty\), and \eqref{eq:u-to-infty} follows from
\(\beta_n=w_n/(d_n-1+w_n)\).  Fix \(\zeta\in(0,1)\) and put
\(s_n=\zeta\beta_n\).  Then \(s_n\) is admissible for all large \(n\),
\[
  t_{\beta_n}(s_n)=\zeta\beta_n(1+o(1)),
\]
and the leading terms in \(G_{\beta_n}(s_n)\) are those of order
\(l_n(d_n-1)\beta_n^2\).  More explicitly, the first line of \(G_\beta\) is
\(O(d_nl_ns_n^2)=O(l_nd_n\beta_n^2)\), the second line contributes
\(2\beta_n\{1-(d_n-1)(l_n-1)t_{\beta_n}(s_n)\}+o(l_nd_n\beta_n^2)\), and the
third line contributes
\(s_n\{2l_n(d_n-1)t_{\beta_n}(s_n)+l_nd_n\beta_n\}+o(l_nd_n\beta_n^2)\).
After inserting \(s_n=\zeta\beta_n\) and
\(t_{\beta_n}(s_n)=\zeta\beta_n(1+o(1))\), the terms of smaller order cancel or are
absorbed into the error, and
\begin{equation}
  G_{\beta_n}(s_n)
  =l_n(d_n-1)\beta_n^2\,\zeta(\zeta-1)(1+o(1))<0.
\label{eq:G-small-ratio}
\end{equation}
This again contradicts \eqref{eq:appendix-G-condition}.

It remains to consider the case
\[
  \frac{\log d_n}{l_n}\longrightarrow\infty.
\]
Put \(a_n=d_n^{-1/l_n}\).  Then \(a_n\to0\), and
\eqref{eq:z-beta-relation} gives
\begin{equation}
  z_n=a_n(1+o(1)),
  \qquad
  d_n\beta_n=1-a_n+o(a_n).
\label{eq:superexp-beta}
\end{equation}
If \(l_n\ge3\) eventually, take \(\zeta=1/3\) and set \(s_n=\zeta\beta_n\).  The formula
for \(c_0\) gives \(c_0(\beta_n)/\beta_n=(2l_n-1)^{-1}+o(1)\), so \(s_n\) is admissible.
Moreover
\[
  t_{\beta_n}(s_n)
  =(1-\zeta)^{1/l_n}\zeta^{1-2/l_n}\beta_n^{1-1/l_n}(1+o(1)).
\]
Substitution into \(G_\beta\), using \eqref{eq:superexp-beta}, yields the leading
term
\[
  s_n t_{\beta_n}(s_n)(d_n-1)
  \{-\beta_n l_n+3\beta_nd_n+2l_n-3\beta_nd_nl_n+o(l_n)\},
\]
while all terms not multiplied by \(t_{\beta_n}(s_n)\) are smaller by a factor
\(a_n^{1/l_n}+o(1)\).  Since \(d_n\beta_n=1-a_n+o(a_n)\), this is equivalent to
\begin{equation}
  G_{\beta_n}(s_n)
  =\zeta\,t_{\beta_n}(s_n)
    \left((l_n-1)\zeta-(l_n-2)+o(l_n)\right)<0,
\label{eq:G-superexp-lge3}
\end{equation}
for all sufficiently large \(n\), because \(\zeta=1/3\) makes the coefficient negative for
all \(l_n\ge3\).

Finally, if the subsequence has \(l_n=2\), choose
\(\zeta_n=a_n^{1/2}=d_n^{-1/4}\) and set \(s_n=\zeta_n\beta_n\).  Then
\(s_n/\beta_n\to0\), so \(s_n<\beta_n-c_0(\beta_n)\) for all large \(n\).  A direct
substitution in \(G_\beta\), using \eqref{eq:superexp-beta} with \(l_n=2\), gives
\begin{equation}
  G_{\beta_n}(s_n)=-a_n^2(1+o(1))<0.
\label{eq:G-superexp-l2}
\end{equation}
Thus the condition \eqref{eq:appendix-G-condition} fails in every possible divergent
sequence of parameter pairs.  Therefore it can hold for only finitely many pairs
\((d,l)\).
\end{proof}

\bigskip
\bigskip

\noindent{\textbf{Acknowledgements.}} Z.L.'s research is supported by NSF grant DMS-1608896 and Simons grant 683143.

\bibliography{matching_cleaned_references}
\bibliographystyle{plain}
\end{document}